\newcommand\iprec{\mathrel{\ooalign{$\prec$\cr
 \,\raise0.85ex\hbox{\scriptsize$\circ$}\cr}}}
\newcounter{Theorem}
\numberwithin{equation}{section}
\numberwithin{Theorem}{section}
\theoremstyle{plain} 
\newtheorem{thm}[Theorem]{Theorem}
\newtheorem{cor}[Theorem]{Corollary}
\newtheorem{lem}[Theorem]{Lemma}
\newtheorem{prop}[Theorem]{Proposition}
\theoremstyle{definition}
\theoremstyle{remark}
\begin{document}

\title{Tremain equiangular tight frames}

\begin{abstract}
Equiangular tight frames provide optimal packings of lines through the origin.
We combine Steiner triple systems with Hadamard matrices to produce a new infinite family of equiangular tight frames.
This in turn leads to new constructions of strongly regular graphs and distance-regular antipodal covers of the complete graph.
\end{abstract}

\author{Matthew Fickus}
\address{Department of Mathematics and Statistics, Air Force Institute of Technology, Wright-Patterson Air Force Base, OH 45433, USA}
\email{Matthew.Fickus@gmail.com}
\author{John Jasper}
\address{Department of Mathematical Sciences, University of Cincinnati, Cincinnati, OH 45221, USA}
\email{jasperjh@ucmail.uc.edu}
\author{Dustin G.\ Mixon}
\address{Department of Mathematics and Statistics, Air Force Institute of Technology, Wright-Patterson Air Force Base, OH 45433, USA}
\email{dustin.mixon@gmail.com}
\author{Jesse Peterson}
\address{European Office of Aerospace Research and Development, Air Force Office of Scientific Research, London, UK}
\email{Jesse.Peterson.1@us.af.mil}

\keywords{equiangular tight frame, Steiner system, Hadamard matrix, strongly regular graph, distance-regular antipodal cover of the complete graph}

\thanks{This research was supported by an AFOSR Young Investigator Research Program award, NSF Grant No.\ DMS-1321779, and AFOSR Grant No.\ F4FGA05076J002. 
The views expressed in this article are those of the authors and do not reflect the official policy or position of the United States Air Force, Department of Defense, or the U.S.\ Government.}

\subjclass[2010]{Primary: 42C15, Secondary: 51E10, 05B20, 05C12}
\date{\today}

\maketitle

\section{Introduction}

Consider the following problem:
Given a finite-dimensional Hilbert space $\mathcal{H}$ and some positive integer $N$, find vectors $\varphi_1,\ldots,\varphi_N\in \mathcal{H}$ of unit length that minimize \textbf{coherence}:
\[
\kappa:=\max_{\substack{i,j\in\{1,\ldots,N\}\\i\neq j}}|\langle\varphi_i,\varphi_j\rangle|.
\]
Geometrically, this problem amounts to packing lines through the origin.
Given their resemblance to error-correcting codes, it comes as no surprise that ensembles of minimal coherence, called \textbf{Grassmannian frames}, lead to communication protocols with minimal cross-talk.
Since their introduction by Strohmer and Heath in 2003~\cite{StrohmerH:03}, Grassmannian frames have received considerable attention in the finite frame theory community~\cite{FickusM:15}.

There were several precursors to the modern study of Grassmannian frames.
One notable example is a 1974 paper of Welch~\cite{Welch:74}, which provides various lower bounds for coherence in terms of the dimension $M$ of $\mathcal{H}$.
This first of these bounds is
\begin{equation}
\label{eq.welch bound}
\kappa
\geq\sqrt{\frac{N-M}{M(N-1)}},
\end{equation}
cf.\ Rankin's 1956 paper~\cite{Rankin:56}.
This has since been dubbed the \textbf{Welch bound}, and one can show that the coherence of an ensemble of unit-length vectors meets equality in the Welch bound precisely when the ensemble forms an \textbf{equiangular tight frame}~\cite{StrohmerH:03}, meaning $|\langle \varphi_i,\varphi_j\rangle|$ is some fixed constant for every choice of $i$ and $j$ with $i\neq j$ (i.e., the vectors are ``equiangular'') and furthermore, the operator $\mathbf{S}\colon\mathcal{H}\rightarrow\mathcal{H}$ defined by
\[
\mathbf{S}x
:=\sum_{i=1}^N \langle x,\varphi_i\rangle \varphi_i
\qquad \forall x\in\mathcal{H}
\]
is some multiple of the identity operator (i.e., the ensemble forms a so-called ``tight frame'').
By exhibiting equality in the Welch bound, equiangular tight frames (ETFs) are necessarily Grassmannian.

The significance of equiangularity brings us to another precursor of sorts.
In 1966, van~Lint and Seidel~\cite{vanLintS:66} first introduced a useful identification between real equiangular ensembles and graphs, in which vertices correspond to vectors, and edges are drawn according to the sign of the corresponding inner product.
Since the Gram matrix $\mathbf{G}=[\langle \varphi_i,\varphi_j\rangle]_{i,j=1}^N$ can be expressed in terms of the adjacency matrix $\mathbf{A}$ of this graph, one may then identify spectral properties of $\mathbf{A}$ with those of $\mathbf{G}$.
This is particularly important in the case of real ETFs, as the tightness criterion implies that $\mathbf{G}^2$ is a multiple of $\mathbf{G}$, which in turn forces $\mathbf{A}$ to satisfy a related quadratic equation.
Indeed, more recent treatments of this identification~\cite{Waldron:09,FickusW:15} have established a one-to-one correspondence between real ETFs and a family of \textbf{strongly regular graphs} (SRGs), namely, graphs with the property that every vertex has the same number $k$ of neighbors, that adjacent vertices have the same number $\lambda$ of common neighbors, and that non-adjacent vertices also have the same number $\mu$ of common neighbors.
Given the maturity of the SRG literature, this identification has compelled finite frame theorists to direct their attention toward complex ETFs, where open problems abound.

However, recent developments in ETF research leave one questioning the insurmountable maturity of the SRG literature.
In particular, a decades-old SRG construction by Goethals and Seidel~\cite{GoethalsS:70} (which leverages a particular family of real equiangular ensembles) was recently generalized in~\cite{FickusJMPW:15} to construct new SRGs from known real ETFs exhibiting certain symmetries.
The notion that ETF research could inform the design of graphs was further suggested by Coutinho, Godsil, Shirazi and Zhan~\cite{CountinhoGSZ:15}, who established that complex ETFs with Gram matrices consisting of prime roots of unity can be used to construct distance-regular antipodal covers of the complete graph (referred to as \textbf{DRACKNs}).

The present paper produces a new infinite family of complex ETFs that the authors discovered by generalizing Example~7.10 in Tremain's notes~\cite{Tremain:09}; we call these \textbf{Tremain ETFs}.
The construction can be written as a real ETF infinitely often, and in some cases, the corresponding SRGs are new (this is established with the help of Brouwer's table of strongly regular graphs~\cite{Brouwer:online,Brouwer:96}).
Indeed, the real ETFs in this paper are the first to be discovered in the last two decades.
In addition, some complex cases of the construction lead to a new infinite family of DRACKNs.

In the next section, we provide some preliminary information about ETFs.
Section~3 then gives our main result, namely, the construction of Tremain ETFs.
We conclude in Section~4 with consequences of this construction to combinatorial design.

\section{Preliminaries}

Throughout, we use $\{\delta_s\}_{s\in\mathcal{S}}$ to denote any fixed orthonormal basis of $\mathbb{C}^\mathcal{S}$.
For any $x\in\mathbb{C}^\mathcal{S}$, we may refer to the $s$th entry of $x$ with respect to this basis $x(s):=\langle x,\delta_s\rangle$.
Also, we identify a given vector $x\in\mathcal{H}$ with the corresponding linear operator $x\colon\mathbb{C}\rightarrow\mathcal{H}$ defined by $x(a):=ax$ for every $a\in\mathbb{C}$.
In particular, this identification allows us to express the adjoint operator $x^*\colon\mathcal{H}\rightarrow\mathbb{C}$ given by $x^*:=\langle\cdot,x\rangle$, along with the outer product operator $xx^*$, etc.
Finally, for any positive integer $k$, we denote $[k]:=\{1,\ldots,k\}$.

In this paper, it is convenient to scale the vectors in an ETF so that the inner products between any two vectors lies in the complex unit circle $\mathbb{T}$.
This scaling then forces the squared norm of each vector to be the reciprocal of the Welch bound \eqref{eq.welch bound}.
For example, consider any $N\times N$ complex Hadamard matrix, that is, an $N\times N$ matrix $\mathbf{H}$ with entries in $\mathbb{T}$ such that $\mathbf{H}\mathbf{H}^*=N\mathbf{I}_N$.
Then removing any row from this matrix will produce an $(N-1)\times N$ matrix $\mathbf{\Phi}$ whose columns form a so-called \textbf{unimodular simplex}.
The fact that such vectors form an ETF is easy to check:
In this case, $M=N-1$, and so the Welch bound \eqref{eq.welch bound} is $1/(N-1)$, and indeed, each column has squared norm $N-1$, whereas inner products between distinct columns lie in $\mathbb{T}$.
If we let $a$ denote the row of $\mathbf{H}$ that was removed to produce $\mathbf{\Phi}$, then we say its entries $\{a_i\}_{i=1}^N$ form the \textbf{Naimark complement} of the columns $\{\varphi_i\}_{i=1}^N$ of $\mathbf{\Phi}$.
The Naimark complement is useful because sums of inner products satisfy
\begin{equation}
\label{eq.naimark}
\langle \varphi_i,\varphi_j\rangle+a_i\overline{a_j}
=\left\{\begin{array}{cl}N&\text{if }i=j\\0&\text{else}\end{array}\right.
\end{equation}
by identifying the sum as an inner product between corresponding columns of $\mathbf{H}$.

Another useful family of ETFs is the so-called \textbf{Steiner ETFs}, first constructed in~\cite{FickusMT:12} from Steiner systems.
A \textbf{$(2,K,V)$-Steiner system} is a collection $\mathcal{V}$ of $V$ points together with a collection $\mathcal{B}$ of size-$K$ subsets of $\mathcal{V}$ (called \textbf{blocks}) with the property that each $2$-element subset of $\mathcal{V}$ is contained in exactly one block.
We write $B=|\mathcal{B}|$.
Also, it turns out that each point is contained in the same number $R$ of blocks.
These parameters satisfy the following identities:
\begin{equation}
\label{eq.steiner identities}
VR=BK,
\qquad
V-1=R(K-1).
\end{equation}
In this paper, we introduce a certain family of operators that will simplify the construction of Steiner ETFs, and help in the construction of Tremain ETFs in the next section.
In particular, for each $v\in\mathcal{V}$, consider an \textbf{embedding operator} $\mathbf{E}_v\colon\mathbb{C}^R\rightarrow\mathbb{C}^\mathcal{B}$ such that for every $b\in\mathcal{B}$ with $v\in b$, there exists $r\in[R]$ such that $\mathbf{E}_v\delta_r=\delta_b$.
For example, suppose $\mathcal{B}=\{b_1,\ldots,b_7\}$ and that $v$ is a member of $R=3$ blocks, specifically, $b_1$, $b_4$ and $b_5$.
Then we may take $\mathbf{E}_v$ to be any linear operator that sends $\{\delta_1,\delta_2,\delta_3\}$ to $\{\delta_{b_1},\delta_{b_4},\delta_{b_5}\}$.
In general, since $\mathbf{E}_v$ maps the orthonormal basis $\{\delta_r\}_{r=1}^R$ onto an orthonormal sequence $\{\delta_b\}_{b\in\mathcal{B},~\! v\in b}$ of the same cardinality, $\mathbf{E}_v$ is an isometry.
These embedding operators satisfy a few useful properties:

\begin{lem}
\label{lem.upsampling}
Given a $(2,K,V)$-Steiner system $(\mathcal{V},\mathcal{B})$ with embedding operators $\{\mathbf{E}_v\}_{v\in\mathcal{V}}$, we have
\begin{itemize}
\item[(a)]
$\sum_{v\in\mathcal{V}}\mathbf{E}_v\mathbf{E}_v^*=K \mathbf{I}_\mathcal{B}$,
\item[(b)]
$\mathbf{E}_v^*\mathbf{E}_v= \mathbf{I}_R$, and
\item[(c)]
for every $v,v'\in\mathcal{V}$ with $v\neq v'$, there exist $i,j\in[R]$ such that $\mathbf{E}_v^*\mathbf{E}_{v'}=\delta_i\delta_j^*$.
\end{itemize}
\end{lem}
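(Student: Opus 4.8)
The plan is to reduce all three parts to the explicit action of each $\mathbf{E}_v$ and its adjoint on basis vectors. Writing $b(v,r)$ for the block with $\mathbf{E}_v\delta_r=\delta_{b(v,r)}$, the defining property of the embedding operators says that $r\mapsto b(v,r)$ is a bijection from $[R]$ onto $\{b\in\mathcal{B}:v\in b\}$. Taking adjoints against the orthonormal bases, $\langle\mathbf{E}_v^*\delta_b,\delta_r\rangle=\langle\delta_b,\mathbf{E}_v\delta_r\rangle=\langle\delta_b,\delta_{b(v,r)}\rangle$, so $\mathbf{E}_v^*\delta_b=\delta_r$ when $v\in b$ (with $r$ the unique preimage) and $\mathbf{E}_v^*\delta_b=0$ when $v\notin b$. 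Once this action is recorded, each part follows by a short computation.

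For (b), I would simply compose: $\mathbf{E}_v^*\mathbf{E}_v\delta_r=\mathbf{E}_v^*\delta_{b(v,r)}=\delta_r$ for every $r\in[R]$, which gives $\mathbf{E}_v^*\mathbf{E}_v=\mathbf{I}_R$. This is just a restatement of the already-noted fact that $\mathbf{E}_v$ is an isometry. For (a), the adjoint action shows that $\mathbf{E}_v\mathbf{E}_v^*$ is the orthogonal projection onto $\lspan\{\delta_b:v\in b\}$; concretely $\mathbf{E}_v\mathbf{E}_v^*=\sum_{b\in\mathcal{B},\,v\in b}\delta_b\delta_b^*$. Summing over $v$ and interchanging the order of summation then yields $\sum_{v\in\mathcal{V}}\mathbf{E}_v\mathbf{E}_v^*=\sum_{b\in\mathcal{B}}|b|\,\delta_b\delta_b^*$. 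Since every block has size $K$ and $\{\delta_b\}_{b\in\mathcal{B}}$ is an orthonormal basis of $\mathbb{C}^\mathcal{B}$, this collapses to $K\mathbf{I}_\mathcal{B}$, which is (a).

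The crux is (c), and this is where the Steiner hypothesis genuinely enters. I would compute $\mathbf{E}_v^*\mathbf{E}_{v'}\delta_j=\mathbf{E}_v^*\delta_{b(v',j)}$, which by the adjoint action is nonzero exactly when $v\in b(v',j)$, i.e.\ when $b(v',j)$ is a block containing both $v$ and $v'$. The defining axiom of a $(2,K,V)$-Steiner system is that the pair $\{v,v'\}$ lies in exactly one block $b^*$; hence there is a single index $j$ with $b(v',j)=b^*$, and for that $j$ the output is $\mathbf{E}_v^*\delta_{b^*}=\delta_i$ for the corresponding $i\in[R]$, while every other column of $\mathbf{E}_v^*\mathbf{E}_{v'}$ vanishes. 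This is precisely the statement that $\mathbf{E}_v^*\mathbf{E}_{v'}=\delta_i\delta_j^*$.

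The only real subtlety across the whole lemma is the uniqueness of the block through $\{v,v'\}$ in (c), which is exactly the defining axiom of a $(2,K,V)$-Steiner system; parts (a) and (b) are routine consequences of the isometry/projection structure. Thus the main work is bookkeeping with basis vectors rather than any genuine obstacle.
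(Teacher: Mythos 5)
Your proposal is correct and follows essentially the same route as the paper: parts (a) and (b) via the projection/isometry structure, and part (c) via the unique block through $\{v,v'\}$. The only cosmetic difference is in (c), where you locate the single nonzero column of $\mathbf{E}_v^*\mathbf{E}_{v'}$ directly, whereas the paper sums all $R^2$ matrix entries (each in $\{0,1\}$) and shows the total is $1$; both hinge on exactly the same Steiner axiom.
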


\begin{proof}
For (a), we first note that, since $\mathbf{E}_v$ is an isometry, $\mathbf{E}_v\mathbf{E}_v^*$ is an orthogonal projection onto its range.
Since $\{\delta_b\}_{b\in\mathcal{B},~\! v\in b}$ is an orthonormal basis for this range, we have
\[
\mathbf{E}_v\mathbf{E}_v^*
=\sum_{\substack{b\in\mathcal{B}\\v\in b}}\delta_b\delta_b^*.
\]
The desired sum can then be simplified using this identity:
\[
\sum_{v\in\mathcal{V}}\mathbf{E}_v\mathbf{E}_v^*
=\sum_{v\in\mathcal{V}}\sum_{\substack{b\in\mathcal{B}\\v\in b}}\delta_b\delta_b^*
=\sum_{b\in\mathcal{B}}\sum_{\substack{v\in\mathcal{V}\\v\in b}}\delta_b\delta_b^*
=K\sum_{b\in\mathcal{B}}\delta_b\delta_b^*
=K \mathbf{I}_\mathcal{B}.
\]
Next, (b) is equivalent to $\mathbf{E}_v$ being an isometry.
For (c), suppose $v\neq v'$.
Then
\[
\langle \mathbf{E}_v^*\mathbf{E}_{v'}\delta_{r'},\delta_r\rangle
=\langle \mathbf{E}_{v'}\delta_{r'},\mathbf{E}_v\delta_r\rangle
=\langle \delta_{b'(r')},\delta_{b(r)}\rangle
\in\{0,1\}
\qquad
\forall r,r'\in[R],
\]
where $\delta_{b(r)}:=\mathbf{E}_v\delta_r$ and $\delta_{b'(r')}:=\mathbf{E}_v\delta_{r'}$.
Summing these quantities gives
\begin{align*}
\sum_{r=1}^R\sum_{r'=1}^R\langle \mathbf{E}_v^*\mathbf{E}_{v'}\delta_{r'},\delta_r\rangle
&=\sum_{r=1}^R\sum_{r'=1}^R\langle \delta_{b'(r')},\delta_{b(r)}\rangle\\
&=\sum_{\substack{b\in\mathcal{B}\\v\in b}}\sum_{\substack{b'\in\mathcal{B}\\v'\in b'}}\langle \delta_{b'},\delta_b\rangle\\
&=\sum_{\substack{b\in\mathcal{B}\\v\in b}}\sum_{\substack{b'\in\mathcal{B}\\v'\in b'}}\left\{\begin{array}{cl}1&\text{if }b=b'\\0&\text{else}\end{array}\right\}
=\#\{b\in\mathcal{B}:v,v'\in b\}
=1.
\end{align*}
We therefore conclude that $\langle \mathbf{E}_v^*\mathbf{E}_{v'}\delta_{r'},\delta_r\rangle=1$ for exactly one choice of $r$ and $r'$, and equals zero otherwise, thereby implying the result.
\end{proof}

These properties are useful in the proof of the following construction:

\begin{prop}[Steiner ETFs, see~\cite{FickusMT:12}]
\label{prop.steiner}
Given a $(2,K,V)$-Steiner system $(\mathcal{V},\mathcal{B})$ with embedding operators $\{\mathbf{E}_v\}_{v\in\mathcal{V}}$ and a unimodular simplex $\{\varphi_s\}_{s=1}^{R+1}$ in $\mathbb{C}^R$, the ensemble $\{\mathbf{E}_v\varphi_s\}_{s=1,~\! v\in \mathcal{V}}^{R+1}$ forms an equiangular tight frame in $\mathbb{C}^\mathcal{B}$.
\end{prop}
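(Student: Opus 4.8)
The plan is to verify the two defining properties of an ETF---tightness and equiangularity---separately, in each case reducing the computation to the three identities for the embedding operators recorded in Lemma~\ref{lem.upsampling} together with the elementary features of a unimodular simplex. First I would collect what is needed about $\{\varphi_s\}_{s=1}^{R+1}$. Since this simplex arises by deleting a single row, say $a^*$, from an $(R+1)\times(R+1)$ complex Hadamard matrix $\mathbf{H}$, every coordinate $\varphi_s(r)$ is an entry of $\mathbf{H}$ and hence lies in $\mathbb{T}$; moreover, restricting $\mathbf{H}\mathbf{H}^*=(R+1)\mathbf{I}_{R+1}$ to the surviving rows yields the simplex frame operator $\sum_{s=1}^{R+1}\varphi_s\varphi_s^*=(R+1)\mathbf{I}_R$, and \eqref{eq.naimark} shows that each $\varphi_s$ has squared norm $R$ while $\langle\varphi_s,\varphi_{s'}\rangle=-a_s\overline{a_{s'}}\in\mathbb{T}$ whenever $s\neq s'$.

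For tightness I would compute the frame operator of the candidate ensemble directly, pulling the simplex frame operator through the embeddings and then applying Lemma~\ref{lem.upsampling}(a):
\[
\mathbf{S}=\sum_{v\in\mathcal{V}}\sum_{s=1}^{R+1}(\mathbf{E}_v\varphi_s)(\mathbf{E}_v\varphi_s)^*=\sum_{v\in\mathcal{V}}\mathbf{E}_v\Big(\sum_{s=1}^{R+1}\varphi_s\varphi_s^*\Big)\mathbf{E}_v^*=(R+1)\sum_{v\in\mathcal{V}}\mathbf{E}_v\mathbf{E}_v^*=K(R+1)\,\mathbf{I}_\mathcal{B}.
\]
Thus $\mathbf{S}$ is a positive multiple of the identity, so the ensemble is a tight frame. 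Since each $\mathbf{E}_v$ is an isometry, Lemma~\ref{lem.upsampling}(b) also gives $\|\mathbf{E}_v\varphi_s\|^2=\|\varphi_s\|^2=R$, so all the vectors share a common norm.

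For equiangularity I would split into cases according to whether two vectors share the same point $v$, writing $\langle\mathbf{E}_v\varphi_s,\mathbf{E}_{v'}\varphi_{s'}\rangle=\langle\mathbf{E}_{v'}^*\mathbf{E}_v\varphi_s,\varphi_{s'}\rangle$. When $v=v'$, Lemma~\ref{lem.upsampling}(b) collapses this to $\langle\varphi_s,\varphi_{s'}\rangle$, which equals $R$ on the diagonal $s=s'$ and has modulus $1$ for $s\neq s'$ by the simplex facts above. The crux is the case $v\neq v'$: here Lemma~\ref{lem.upsampling}(c) supplies indices $i,j\in[R]$ with $\mathbf{E}_{v'}^*\mathbf{E}_v=\delta_i\delta_j^*$, so the inner product reduces to the single coordinate product $\varphi_s(j)\overline{\varphi_{s'}(i)}$, each factor of which lies in $\mathbb{T}$ and therefore has modulus $1$. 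Combining the cases, every off-diagonal inner product has modulus exactly $1$ while every vector has squared norm $R$, which is precisely equiangularity in the scaling used here; together with tightness this proves the ensemble is an ETF.

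The step I expect to demand the most care is the $v\neq v'$ computation: one must correctly track how the rank-one operator $\delta_i\delta_j^*$ from Lemma~\ref{lem.upsampling}(c) extracts individual coordinates of the two simplex vectors, and then invoke unimodularity of the Hadamard entries to see that the resulting modulus is exactly $1$ and hence matches the common off-diagonal value arising from the $v=v'$, $s\neq s'$ case. The remaining manipulations are routine applications of the identities already in hand.
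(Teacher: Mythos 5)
Your proposal is correct and follows essentially the same route as the paper: tightness via pulling the simplex frame operator $\sum_s\varphi_s\varphi_s^*=(R+1)\mathbf{I}_R$ through the embeddings and applying Lemma~\ref{lem.upsampling}(a), then equiangularity by the same case split on $v=v'$ versus $v\neq v'$ using parts (b) and (c) of the lemma. The only difference is that you additionally record the explicit value $\langle\varphi_s,\varphi_{s'}\rangle=-a_s\overline{a_{s'}}$ from the Naimark relation, which the paper does not need here.
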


As an example, we use a $(2,3,7)$-Steiner system known as the Fano plane to construct a Steiner ETF of $28$ vectors in $\mathbb{C}^7$ (actually, $\mathbb{R}^7$).
The following matrix is organized into $V=7$ blocks, each of size $B\times(R+1)=7\times 4$, obtained by applying a corresponding embedding operator $\mathbf{E}_v$ to a fixed unimodular simplex extracted from a $4\times 4$ Hadamard matrix:
\[
\left[
\begin{smallmatrix}
+&-&+&-&+&-&+&-&+&-&+&-&&&&&&&&&&&&&&&&\\
+&+&-&-&&&&&&&&&+&-&+&-&+&-&+&-&&&&&&&&\\
+&-&-&+&&&&&&&&&&&&&&&&&+&-&+&-&+&-&+&-\\
&&&&+&+&-&-&&&&&+&+&-&-&&&&&+&+&-&-&&&&\\
&&&&+&-&-&+&&&&&&&&&+&+&-&-&&&&&+&+&-&-\\
&&&&&&&&+&+&-&-&+&-&-&+&&&&&&&&&+&-&-&+\\
&&&&&&&&+&-&-&+&&&&&+&-&-&+&+&-&-&+&&&&
\end{smallmatrix}
\right]
\]
(For convenience, we chose to scale the vectors so that each coordinate has size $0$ or $1$; the scaling has no effect on whether the columns form an ETF.)
Here, $+$ denotes $1$, $-$ denotes $-1$, and blank entries are zeros.
It is clear from this example that the columns have squared norm $3$ (the number of blocks incident to each point), columns from a common block have inner product $-1$ (since they belong to a common embedded simplex), and the supports of columns from different blocks intersect at a single entry (since two points determine a block), leading to an inner product of $\pm1$.
Additionally, tightness follows from the fact that the rows of this matrix are orthogonal with equal norms.

\begin{proof}[Proof of Proposition~\ref{prop.steiner}]
First, the unimodular simplex satisfies $\sum_{s=1}^{R+1}\varphi_s\varphi_s^*=(R+1)\mathbf{I}_R$, which gives tightness:
\[
\sum_{v\in\mathcal{V}}\sum_{s=1}^{R+1}(\mathbf{E}_v\varphi_s)(\mathbf{E}_v\varphi_s)^*
=\sum_{v\in\mathcal{V}}\mathbf{E}_v\bigg(\sum_{s=1}^{R+1}\varphi_s\varphi_s^*\bigg)\mathbf{E}_v^*
=(R+1)\sum_{v\in\mathcal{V}}\mathbf{E}_v\mathbf{E}_v^*
=K(R+1) \mathbf{I}_\mathcal{B},
\]
where the last step follows from Lemma~\ref{lem.upsampling}(a).
Next, Lemma~\ref{lem.upsampling}(b) gives
\[
\langle \mathbf{E}_v\varphi_s,\mathbf{E}_v\varphi_{s'}\rangle
=\langle \mathbf{E}_v^*\mathbf{E}_v\varphi_s,\varphi_{s'}\rangle
=\langle \varphi_s,\varphi_{s'}\rangle,
\]
which equals $\|\varphi_s\|^2=R$ when $s=s'$, and otherwise resides in the complex unit circle $\mathbb{T}$.
Finally, for $v\neq v'$, Lemma~\ref{lem.upsampling}(c) gives
\[
\langle \mathbf{E}_v\varphi_s,\mathbf{E}_{v'}\varphi_{s'}\rangle
=\langle \mathbf{E}_{v'}^*\mathbf{E}_v\varphi_s,\varphi_{s'}\rangle
=\langle \delta_i\delta_j^*\varphi_s,\varphi_{s'}\rangle
=\langle \delta_j^*\varphi_s,\delta_i^*\varphi_{s'}\rangle
=\varphi_s(j)\overline{\varphi_{s'}(i)}
\in\mathbb{T},
\]
thereby implying equiangularity.
\end{proof}

\section{Tremain ETFs}

What follows is our main result, which specifically uses $(2,3,V)$-Steiner systems, known as \textbf{Steiner triple systems}:

\begin{thm}[Tremain ETFs]
\label{thm.tremain}
Given a Steiner triple system $(\mathcal{V},\mathcal{B})$ with embedding operators $\{\mathbf{E}_v\}_{v\in\mathcal{V}}$, unimodular simplices $\{\varphi_s\}_{s=1}^{R+1}$ in $\mathbb{C}^R$ and $\{\psi_t\}_{t=1}^{V+1}$ in $\mathbb{C}^\mathcal{V}$, and corresponding Naimark complements $\{a_s\}_{s=1}^{R+1}$ and $\{b_t\}_{t=1}^{V+1}$ in $\mathbb{C}$, the ensemble
\[
\Big\{\mathbf{E}_v\varphi_s\oplus\sqrt{2}a_s\delta_v\oplus 0\Big\}_{s=1,~\! v\in \mathcal{V}}^{R+1}
\cup\Big\{0_\mathcal{B}\oplus\sqrt{\tfrac{1}{2}}\psi_t\oplus\sqrt{\tfrac{3}{2}}b_t\Big\}_{t=1}^{V+1}
\]
forms an equiangular tight frame in $\mathbb{C}^\mathcal{B}\oplus \mathbb{C}^\mathcal{V}\oplus \mathbb{C}$.
\end{thm}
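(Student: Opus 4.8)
\textit{Proof proposal.}
The plan is to verify the three defining properties of an ETF directly for the combined ensemble: a common vector norm, a constant off-diagonal inner-product modulus (equiangularity), and a frame operator proportional to the identity (tightness). I will call the first family the Type~I vectors $\eta_{v,s}:=\mathbf{E}_v\varphi_s\oplus\sqrt{2}a_s\delta_v\oplus 0$ and the second the Type~II vectors $\zeta_t:=0_\mathcal{B}\oplus\sqrt{\tfrac12}\psi_t\oplus\sqrt{\tfrac32}b_t$. Throughout I would use the Steiner triple system identities $K=3$ and $V=2R+1$ (from \eqref{eq.steiner identities}), the tightness of the two unimodular simplices, $\sum_{s=1}^{R+1}\varphi_s\varphi_s^*=(R+1)\mathbf{I}_R$ and $\sum_{t=1}^{V+1}\psi_t\psi_t^*=(V+1)\mathbf{I}_\mathcal{V}$, the fact that the entries of each $\varphi_s$ and $\psi_t$, as well as each $a_s$ and $b_t$, all lie in $\mathbb{T}$, and the Naimark relation \eqref{eq.naimark} applied to each simplex. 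I would first record one extra consequence: since \eqref{eq.naimark} says the augmented vectors $\varphi_s\oplus a_s\in\mathbb{C}^R\oplus\mathbb{C}$ are orthogonal with common squared norm $R+1$, they satisfy $\sum_{s=1}^{R+1}(\varphi_s\oplus a_s)(\varphi_s\oplus a_s)^*=(R+1)\mathbf{I}_{R+1}$, whose off-diagonal block yields $\sum_{s=1}^{R+1}\overline{a_s}\varphi_s=0$, and likewise $\sum_{t=1}^{V+1}\overline{b_t}\psi_t=0$. These vanishing sums are exactly what will clear the cross terms of the frame operator.

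Since each $\mathbf{E}_v$ is an isometry and $|a_s|=|b_t|=1$, each Type~I vector has squared norm $R+2$ and each Type~II vector has squared norm $\tfrac12 V+\tfrac32=R+2$ by $V=2R+1$, so all norms agree. For equiangularity I would split into three cases. For two distinct Type~I vectors $\eta_{v,s},\eta_{v',s'}$: when $v=v'$, Lemma~\ref{lem.upsampling}(b) and \eqref{eq.naimark} give $\langle\varphi_s,\varphi_{s'}\rangle+2a_s\overline{a_{s'}}=a_s\overline{a_{s'}}$ for $s\neq s'$, of modulus $1$; when $v\neq v'$ the $\delta_v$-term vanishes and Lemma~\ref{lem.upsampling}(c) reduces the surviving term to a product of two unimodular entries, again of modulus $1$, exactly as in the proof of Proposition~\ref{prop.steiner}. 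For a Type~I vector against a Type~II vector, only the $\mathbb{C}^\mathcal{V}$ slots interact, giving $\sqrt{2}\,\sqrt{\tfrac12}\,a_s\overline{\psi_t(v)}=a_s\overline{\psi_t(v)}$ of modulus $1$. For two distinct Type~II vectors, \eqref{eq.naimark} gives $\tfrac12\langle\psi_t,\psi_{t'}\rangle+\tfrac32 b_t\overline{b_{t'}}=b_t\overline{b_{t'}}$ of modulus $1$. Hence every off-diagonal modulus equals $1$, so the ensemble is equiangular.

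Finally, for tightness I would assemble the frame operator $\mathbf{S}$ as a $3\times3$ block operator on $\mathbb{C}^\mathcal{B}\oplus\mathbb{C}^\mathcal{V}\oplus\mathbb{C}$. The $(1,1)$ block receives only the Type~I contribution $\sum_{v\in\mathcal{V}}\mathbf{E}_v\big(\sum_s\varphi_s\varphi_s^*\big)\mathbf{E}_v^*=(R+1)K\,\mathbf{I}_\mathcal{B}=3(R+1)\mathbf{I}_\mathcal{B}$ by Lemma~\ref{lem.upsampling}(a). The $(2,2)$ block collects $2(R+1)\mathbf{I}_\mathcal{V}$ from Type~I and $\tfrac12(V+1)\mathbf{I}_\mathcal{V}$ from Type~II, totalling $3(R+1)\mathbf{I}_\mathcal{V}$ by $V=2R+1$, while the $(3,3)$ block is $\tfrac32(V+1)=3(R+1)$. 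The $(1,2)$ block equals $\sqrt{2}\sum_{v\in\mathcal{V}}\mathbf{E}_v\big(\sum_s\overline{a_s}\varphi_s\big)\delta_v^*$ and the $(2,3)$ block equals $\tfrac{\sqrt3}{2}\sum_t\overline{b_t}\psi_t$, both vanishing by the identities from the first step, and the $(1,3)$ block vanishes since Type~I has a zero final coordinate. Thus $\mathbf{S}=3(R+1)\mathbf{I}$, giving tightness and completing the verification that the ensemble is an ETF.

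I would flag that the crux is not any single computation but the precise tuning of the scalars $\sqrt{2},\sqrt{\tfrac12},\sqrt{\tfrac32}$ together with the triple-system constraints $K=3$ and $V=2R+1$: these are exactly what force the three equiangularity cases to share the common modulus $1$ and the three diagonal blocks to share the common value $3(R+1)$, while the Naimark vanishing sums $\sum_s\overline{a_s}\varphi_s=0$ and $\sum_t\overline{b_t}\psi_t=0$ are what annihilate the off-diagonal blocks. Identifying and justifying these vanishing sums is the one ingredient beyond the routine bookkeeping, so that is where I would concentrate the care.
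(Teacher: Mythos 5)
Your proposal is correct, and the equiangularity portion (equal norms plus the case analysis of pairwise inner products) is essentially identical to the paper's: same four cases, same uses of Lemma~\ref{lem.upsampling}(b),(c) and the Naimark relation \eqref{eq.naimark}, same resulting unimodular values. Where you diverge is in how tightness is handled. The paper never computes the frame operator: it first simplifies the Welch bound to $1/(R+2)$ using \eqref{eq.steiner identities} with $K=3$, and then invokes the characterization that an ensemble whose vectors all have squared norm $R+2$ and whose distinct pairwise inner products all lie in $\mathbb{T}$ meets the Welch bound with equality, which already forces the ensemble to be an ETF --- so equiangularity plus the norm check is the entire proof. You instead verify tightness directly by assembling $\mathbf{S}$ as a $3\times 3$ block operator, which obliges you to identify and justify the vanishing sums $\sum_s\overline{a_s}\varphi_s=0$ and $\sum_t\overline{b_t}\psi_t=0$ (correctly derived from the column orthogonality of the underlying Hadamard matrices) to kill the off-diagonal blocks, and to confirm that all three diagonal blocks equal $3(R+1)\mathbf{I}$. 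Your computations check out ($3(R+1)=N(R+2)/M$ as required), so the argument is complete; it buys a self-contained proof that does not lean on the Welch-bound-equality equivalence from~\cite{StrohmerH:03}, at the cost of the extra bookkeeping, whereas the paper's route shows why only the norm and inner-product computations are needed at all.
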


For the sake of illustration, we turn to Example~7.10 in Tremain's notes~\cite{Tremain:09}.
Consider the columns of the following matrix:
\[
\left[
\begin{smallmatrix}
+&-&+&-&+&-&+&-&+&-&+&-&&&&&&&&&&&&&&&&&&&&&&&&\\
+&+&-&-&&&&&&&&&+&-&+&-&+&-&+&-&&&&&&&&&&&&&&&&\\
+&-&-&+&&&&&&&&&&&&&&&&&+&-&+&-&+&-&+&-&&&&&&&&\\
&&&&+&+&-&-&&&&&+&+&-&-&&&&&+&+&-&-&&&&&&&&&&&&\\
&&&&+&-&-&+&&&&&&&&&+&+&-&-&&&&&+&+&-&-&&&&&&&&\\
&&&&&&&&+&+&-&-&+&-&-&+&&&&&&&&&+&-&-&+&&&&&&&&\\
&&&&&&&&+&-&-&+&&&&&+&-&-&+&+&-&-&+&&&&&&&&&&&&\\
\bullet&\bullet&\bullet&\bullet&&&&&&&&&&&&&&&&&&&&&&&&&\blacksquare&\square&\blacksquare&\square&\blacksquare&\square&\blacksquare&\square\\
&&&&\bullet&\bullet&\bullet&\bullet&&&&&&&&&&&&&&&&&&&&&\blacksquare&\blacksquare&\square&\square&\blacksquare&\blacksquare&\square&\square\\
&&&&&&&&\bullet&\bullet&\bullet&\bullet&&&&&&&&&&&&&&&&&\blacksquare&\square&\square&\blacksquare&\blacksquare&\square&\square&\blacksquare\\
&&&&&&&&&&&&\bullet&\bullet&\bullet&\bullet&&&&&&&&&&&&&\blacksquare&\blacksquare&\blacksquare&\blacksquare&\square&\square&\square&\square\\
&&&&&&&&&&&&&&&&\bullet&\bullet&\bullet&\bullet&&&&&&&&&\blacksquare&\square&\blacksquare&\square&\square&\blacksquare&\square&\blacksquare\\
&&&&&&&&&&&&&&&&&&&&\bullet&\bullet&\bullet&\bullet&&&&&\blacksquare&\blacksquare&\square&\square&\square&\square&\blacksquare&\blacksquare\\
&&&&&&&&&&&&&&&&&&&&&&&&\bullet&\bullet&\bullet&\bullet&\blacksquare&\square&\square&\blacksquare&\square&\blacksquare&\blacksquare&\square\\
&&&&&&&&&&&&&&&&&&&&&&&&&&&&\Diamond&\Diamond&\Diamond&\Diamond&\Diamond&\Diamond&\Diamond&\Diamond
\end{smallmatrix}
\right]
\]
Here, $+$ and $-$ denote $\pm1$ and blanks are zeros as before, and furthermore
\[
\bullet=\sqrt{2},
\qquad
\blacksquare=\sqrt{\tfrac{1}{2}},
\qquad
\square=-\sqrt{\tfrac{1}{2}},
\qquad
\Diamond=\sqrt{\tfrac{3}{2}}.
\]
The first $28$ columns come from the Steiner ETF of the previous section.
Steiner ETF vectors in a common embedded simplex have inner product $-1$, and in the above matrix, these vectors recieve an additional entry in the $\mathbb{C}^\mathcal{V}$ component that changes their inner product to $+1$.
Otherwise, the Steiner-based vectors in the Tremain ETF do not interact in the $\mathbb{C}^\mathcal{V}$ or $\mathbb{C}$ components, thereby inheriting equiangularity from the Steiner ETF.
The other vectors are supported solely in the $\mathbb{C}^\mathcal{V}$ and $\mathbb{C}$ components: the portion in $\mathbb{C}^\mathcal{V}$ forms a simplex extracted from an $8\times 8$ Hadamard matrix whose inner products are $-1/2$, and including the $\mathbb{C}$ component changes these inner products to $+1$.
Also, the Steiner-based vectors have only one entry of support in common with these other vectors, allowing the inner products to have modulus $\sqrt{2}\cdot\sqrt{1/2}=1$.
Finally, the vectors all have squared norm $5$, corresponding to the Welch bound \eqref{eq.welch bound} for $M=16$ and $N=35$ (namely, $1/5$), and so this ensemble meets equality in the Welch bound, and equivalently forms an ETF. 

\begin{proof}[Proof of Theorem~\ref{thm.tremain}]
First, since $M=B+V+1$ and $N=V(R+1)+(V+1)$, a persistent application of the identities \eqref{eq.steiner identities} with $K=3$ simplifies the Welch bound in this case:
\[
\sqrt{\frac{N-M}{M(N-1)}}
=\frac{1}{R+2}.
\]
It suffices to demonstrate equality in the Welch bound, e.g., to show that each member of the purported ETF has squared norm $R+2$, and that inner products between distinct members lie in $\mathbb{T}$.
For convenience, we denote
\begin{equation}
\label{eq.names of tremain vectors}
\tau_{v,s}
:=\mathbf{E}_v\varphi_s\oplus\sqrt{2}a_s\delta_v\oplus 0
\qquad
\text{and}
\qquad
\tau_t
:=0_\mathcal{B}\oplus\sqrt{\tfrac{1}{2}}\psi_t\oplus\sqrt{\tfrac{3}{2}}b_t.
\end{equation}
Checking the norms is straightforward:
\begin{align*}
\|\tau_{v,s}\|^2
&=\|\mathbf{E}_v\varphi_s\|^2+\|\sqrt{2}a_s\delta_v\|^2
=\|\varphi_s\|^2+2|a_s|^2
=R+2,\\
\|\tau_t\|^2
&=\Big\|\sqrt{\tfrac{1}{2}}\psi_t\Big\|^2+\Big\|\sqrt{\tfrac{3}{2}}b_t\Big\|^2
=\tfrac{1}{2}\|\psi_t\|^2+\tfrac{3}{2}|b_t|^2
=\tfrac{1}{2}V+\tfrac{3}{2}
=R+2.
\end{align*}
To demonstrate equiangularity, we proceed in cases:

Case I: 
Suppose $v=v'$ but $s\neq s'$.
Then $\langle \mathbf{E}_v\varphi_s,\mathbf{E}_v\varphi_{s'}\rangle=\langle \mathbf{E}_v^*\mathbf{E}_v\varphi_s,\varphi_{s'}\rangle=\langle \varphi_s,\varphi_{s'}\rangle$ by Lemma~\ref{lem.upsampling}(b), and so the Naimark property \eqref{eq.naimark} gives
\begin{equation}
\label{eq.ip1}
\langle \tau_{v,s},\tau_{v',s'}\rangle
=\langle \mathbf{E}_v\varphi_s,\mathbf{E}_v\varphi_{s'}\rangle+2a_s\overline{a_{s'}}
=\langle \varphi_s,\varphi_{s'}\rangle+2a_s\overline{a_{s'}}
=a_s\overline{a_{s'}}.
\end{equation}
Case II:
Suppose $v\neq v'$.
Then for every $s,s'\in[R+1]$, Lemma~\ref{lem.upsampling}(c) gives
\begin{equation}
\label{eq.ip2}
\langle \tau_{v,s},\tau_{v',s'}\rangle
=\langle \mathbf{E}_v\varphi_s,\mathbf{E}_{v'}\varphi_{s'}\rangle
=\langle \mathbf{E}_{v'}^*\mathbf{E}_v\varphi_s,\varphi_{s'}\rangle
=\langle \delta_i\delta_j^*\varphi_s,\varphi_{s'}\rangle
=\varphi_s(j)\overline{\varphi_{s'}(i)}.
\end{equation}
Case III:
Suppose $t\neq t'$.
Then the Naimark property \eqref{eq.naimark} gives
\begin{equation}
\label{eq.ip3}
\langle\tau_t,\tau_{t'}\rangle
=\tfrac{1}{2}\langle\psi_t,\psi_{t'}\rangle+\tfrac{3}{2}b_t\overline{b_{t'}}
=\tfrac{1}{2}\Big(\langle\psi_t,\psi_{t'}\rangle+b_t\overline{b_{t'}}\Big)+b_t\overline{b_{t'}}
=b_t\overline{b_{t'}}.
\end{equation}
Case IV:
For every $v\in\mathcal{V}$, $s\in[R+1]$ and $t\in[V+1]$, we have
\begin{equation}
\label{eq.ip4}
\langle\tau_{v,s},\tau_t\rangle
=\Big\langle \sqrt{2}a_s\delta_v,\sqrt{\tfrac{1}{2}}\psi_t\Big\rangle
=a_s\psi_t(v).
\end{equation}
All of these inner products lie in $\mathbb{T}$, and so we are done.
\end{proof}

Since Steiner triple systems exist if and only if $V\equiv 1$ or $3\bmod 6$ with $V\geq3$, and since complex unimodular simplices exist in every dimension, we have the following corollary:

\begin{cor}
For every $V\equiv 1$ or $3\bmod 6$ with $V\geq3$, there exists an equiangular tight frame of $N$ vectors in $\mathbb{C}^M$ with
\[
M=\tfrac{1}{6}(V+2)(V+3),
\qquad
N=\tfrac{1}{2}(V+1)(V+2).
\]
\end{cor}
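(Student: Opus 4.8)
The plan is to realize the desired ensemble as a direct instance of Theorem~\ref{thm.tremain}, so that the entire argument reduces to (i) supplying the three combinatorial ingredients the theorem demands and (ii) rewriting the resulting parameters $M$ and $N$ in terms of $V$ alone. No fresh frame-theoretic work is needed, since Theorem~\ref{thm.tremain} already certifies that the output is an ETF; I only need to guarantee that its hypotheses can be met for each admissible $V$ and then run the bookkeeping.

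First I would invoke the classical existence theorem for Steiner triple systems (Kirkman): a $(2,3,V)$-Steiner system exists precisely when $V\equiv 1$ or $3\bmod 6$ with $V\geq 3$. For any such $V$, fix one such system $(\mathcal{V},\mathcal{B})$ together with a choice of embedding operators $\{\mathbf{E}_v\}_{v\in\mathcal{V}}$. Second, I would produce the two unimodular simplices required by the theorem. Since a complex Hadamard matrix exists in every dimension---for instance the DFT matrix $\mathbf{F}_n=[\omega^{jk}]_{j,k}$ with $\omega=e^{2\pi\mathrm{i}/n}$---deleting any one row yields a unimodular simplex in every dimension, with the deleted row serving as a Naimark complement. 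In particular there exist a unimodular simplex $\{\varphi_s\}_{s=1}^{R+1}$ in $\mathbb{C}^R$ and a unimodular simplex $\{\psi_t\}_{t=1}^{V+1}$ in $\mathbb{C}^{\mathcal{V}}$, together with their Naimark complements $\{a_s\}_{s=1}^{R+1}$ and $\{b_t\}_{t=1}^{V+1}$. Feeding all of these into Theorem~\ref{thm.tremain} yields an equiangular tight frame in $\mathbb{C}^{\mathcal{B}}\oplus\mathbb{C}^{\mathcal{V}}\oplus\mathbb{C}$.

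Finally I would compute the dimension $M=B+V+1$ and the cardinality $N=V(R+1)+(V+1)$ of this ensemble as functions of $V$. Specializing the Steiner identities \eqref{eq.steiner identities} to $K=3$ gives $V-1=2R$ and $VR=3B$, hence $R=\tfrac{1}{2}(V-1)$ and $B=\tfrac{1}{6}V(V-1)$. Substituting and factoring the resulting quadratics then gives $M=\tfrac{1}{6}\big(V^2+5V+6\big)=\tfrac{1}{6}(V+2)(V+3)$ and $N=\tfrac{1}{2}\big(V^2+3V+2\big)=\tfrac{1}{2}(V+1)(V+2)$, exactly as claimed.

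I expect no genuine obstacle here: the statement is an immediate corollary once the two external existence facts are in hand, and the remaining computation is just the elementary algebra of rewriting $B+V+1$ and $V(R+1)+V+1$ in factored form. The only point meriting a moment's care is to confirm that both required simplex sizes, $R+1$ and $V+1$, are actually attainable---which they are, precisely because complex Hadamard matrices exist in all dimensions, so neither parameter imposes an additional constraint beyond the congruence condition on $V$.
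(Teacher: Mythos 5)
Your proposal is correct and matches the paper's (essentially one-line) justification: existence of Steiner triple systems for $V\equiv 1,3\bmod 6$, existence of complex unimodular simplices in every dimension via complex Hadamard matrices, and then the parameter bookkeeping $M=B+V+1$, $N=V(R+1)+(V+1)$ with $R=\tfrac12(V-1)$ and $B=\tfrac16V(V-1)$. The algebra checks out, so nothing further is needed.
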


As illustrated by Tremain's original example above, this construction is real whenever the unimodular simplices come from real Hadamard matrices.
In particular, a $(V+1)\times(V+1)$ real Hadamard matrix may be obtained from a Kronecker product of a $(R+1)\times(R+1)$ real Hadamard matrix with a $2\times 2$ real Hadamard matrix.
This gives another corollary:

\begin{cor}
\label{cor.real tremain}
If there exists an $h\times h$ real Hadamard matrix with $h\equiv 1$ or $2\bmod 3$, then there exists an equiangular tight frame of $N$ vectors in $\mathbb{R}^M$ with
\[
M=\tfrac{1}{3}(h+1)(2h+1),
\qquad
N=h(2h+1).
\]
\end{cor}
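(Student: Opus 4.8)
The plan is to realize the pair $(M,N)$ as a special instance of Theorem~\ref{thm.tremain} in which every ingredient can be taken real. To that end I would first fix the parameters of the underlying Steiner triple system by setting $R+1=h$, so that $R=h-1$. Since a Steiner triple system has $K=3$, the second identity in \eqref{eq.steiner identities} reads $V-1=R(K-1)=2R$, forcing $V=2R+1=2h-1$, and then the first identity $VR=BK$ pins down $B=\tfrac13 VR=\tfrac13(2h-1)(h-1)$. Thus all the combinatorial data of the intended Tremain ETF is determined by $h$ alone, and it remains only to verify that the required objects exist and that the output is real.

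Next I would confirm existence of the two inputs. For the Steiner triple system, recall (as noted just before the first corollary) that one exists on $V$ points precisely when $V\equiv1$ or $3\bmod6$ with $V\ge3$. A direct congruence check shows $2h-1\equiv1\bmod6$ exactly when $h\equiv1\bmod3$ and $2h-1\equiv3\bmod6$ exactly when $h\equiv2\bmod3$, so the hypothesis $h\equiv1$ or $2\bmod3$ is equivalent to $V=2h-1$ admitting a Steiner triple system; here $V=2h-1\ge3$ whenever $h\ge2$, while the degenerate case $h=1$ gives $M=2,N=3$, realized directly by the three equiangular lines in $\mathbb{R}^2$. The two unimodular simplices are then produced from real Hadamard matrices: I would obtain $\{\varphi_s\}_{s=1}^{R+1}$ in $\mathbb{R}^R$ by deleting a row of the given $(R+1)\times(R+1)=h\times h$ real Hadamard matrix, with the deleted row furnishing the real Naimark complement $\{a_s\}_{s=1}^{R+1}$; and for $\{\psi_t\}_{t=1}^{V+1}$ in $\mathbb{R}^V$ I would first build a $(V+1)\times(V+1)=2h\times2h$ real Hadamard matrix as the Kronecker product of the given $h\times h$ real Hadamard matrix with $\bigl[\begin{smallmatrix}1&1\\1&-1\end{smallmatrix}\bigr]$ (the Kronecker product of real Hadamard matrices being again real Hadamard), and then delete a row to get $\{\psi_t\}$ and its real complement $\{b_t\}_{t=1}^{V+1}$.

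With these choices, every ingredient fed into Theorem~\ref{thm.tremain} is real, so the resulting ensemble lies in $\mathbb{R}^{\mathcal{B}}\oplus\mathbb{R}^{\mathcal{V}}\oplus\mathbb{R}\cong\mathbb{R}^{M}$ and forms a real ETF of $N=V(R+1)+(V+1)$ vectors in dimension $M=B+V+1$. I would finish with the routine substitution $R=h-1$, $V=2h-1$, $B=\tfrac13(2h-1)(h-1)$, which yields $N=(2h-1)h+2h=h(2h+1)$ and $M=\tfrac13(2h-1)(h-1)+(2h-1)+1=\tfrac13(h+1)(2h+1)$, matching the claim. I expect no difficulty in the arithmetic; the only genuine content is the two existence inputs—the congruence bookkeeping certifying the Steiner triple system, and the Kronecker-product argument certifying the $2h\times2h$ real Hadamard matrix—after which the corollary is immediate from Theorem~\ref{thm.tremain}.
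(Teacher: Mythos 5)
Your proposal is correct and follows essentially the same route as the paper, which justifies this corollary by the remark immediately preceding it: take $R+1=h$, $V+1=2h$, obtain the $(V+1)\times(V+1)$ real Hadamard matrix as the Kronecker product of the given $h\times h$ one with a $2\times 2$ one, and feed everything into Theorem~\ref{thm.tremain}. Your congruence bookkeeping, the parameter arithmetic, and even the treatment of the degenerate case $h=1$ (which the paper silently ignores) are all sound.
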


The following section demonstrates the novelty of these ETFs by using them to construct new strongly regular graphs and distance-regular antipodal covers of the complete graph.

\section{New SRGs and DRACKNs}

Recall that a $(v,k,\lambda,\mu)$-strongly regular graph (SRG) is a $v$-vertex graph such that every vertex has the same number $k$ of neighbors, that adjacent vertices have the same number $\lambda$ of common neighbors, and that non-adjacent vertices also have the same number $\mu$ of common neighbors.
There are two ways that certain ETFs can be associated with SRGs.
First, there is a one-to-one correspondence between real ETFs (with $N>M$) and a certain class of SRGs, specifically SRGs with parameters satisfying $2\mu=k$:

\begin{prop}[see~\cite{Waldron:09,FickusW:15}]
\label{Waldron}
Suppose there exists an equiangular tight frame of $N$ vectors in $\mathbb{R}^M$ with $N>M$, and set
\[
\beta = \sqrt{\frac{N-M}{M(N-1)}}.
\]
Then there exists a strongly regular graph with parameters
\[
v=N-1,
\quad 
k=\frac{N}{2} - 1 + \frac{1}{2\beta}\left(\frac{N}{M}-2\right),
\quad 
\lambda = \frac{3k-v-1}{2},
\quad 
\mu=\frac{k}{2}.
\]
\end{prop}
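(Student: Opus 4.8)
The plan is to pass to the Gram matrix and recognize the real ETF as (the switching class of) a regular two-graph, whose descendant on $N-1$ vertices is the desired SRG.

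First I would rescale the frame to unit norm, so that the Gram matrix $\mathbf{G}=[\langle\varphi_i,\varphi_j\rangle]_{i,j=1}^N$ has ones on its diagonal and real entries of modulus $\beta$ off it; since the vectors are real each off-diagonal entry is $\pm\beta$, and writing $\mathbf{G}=\mathbf{I}_N+\beta\mathbf{Q}$ exhibits $\mathbf{Q}$ as the Seidel matrix of a graph on $N$ vertices, with $\mathbf{Q}_{ij}=+1$ exactly when $\langle\varphi_i,\varphi_j\rangle=+\beta$ (so adjacency corresponds to positive inner product, the van~Lint--Seidel convention). Tightness gives $\mathbf{G}^2=\tfrac{N}{M}\mathbf{G}$ (the frame operator equals $\tfrac{N}{M}\mathbf{I}_M$ for a unit-norm tight frame); substituting $\mathbf{G}=\mathbf{I}_N+\beta\mathbf{Q}$ and simplifying shows that $\mathbf{Q}$ has exactly the two eigenvalues
\[
\rho_1=\tfrac1\beta\big(\tfrac{N}{M}-1\big),
\qquad
\rho_2=-\tfrac1\beta ,
\]
so that $\mathbf{Q}^2=(\rho_1+\rho_2)\mathbf{Q}-\rho_1\rho_2\mathbf{I}_N$. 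Comparing diagonal entries (each equal to $N-1$) gives $\rho_1\rho_2=-(N-1)$, an identity I will use below.

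The key structural move is Seidel switching. Negating any subset of the $\varphi_i$ leaves the ensemble a real ETF (the moduli of inner products and the frame operator are unchanged) and conjugates $\mathbf{Q}$ by a diagonal sign matrix, so I may switch so that $\langle\varphi_N,\varphi_i\rangle=+\beta$ for every $i\neq N$; then
\[
\mathbf{Q}=\begin{pmatrix}\mathbf{Q}_0&\mathbf{1}\\ \mathbf{1}^{\top}&0\end{pmatrix},
\qquad
\mathbf{Q}_0=2\mathbf{A}-\mathbf{J}+\mathbf{I},
\]
where $\mathbf{A}$ is the adjacency matrix of the induced graph $\Gamma$ on the first $N-1$ vertices, $\mathbf{J}$ is the all-ones matrix, and $\mathbf{1}$ is the all-ones vector. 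The off-diagonal block of $\mathbf{Q}^2=(\rho_1+\rho_2)\mathbf{Q}-\rho_1\rho_2\mathbf{I}_N$ reads $\mathbf{Q}_0\mathbf{1}=(\rho_1+\rho_2)\mathbf{1}$, i.e.\ $\mathbf{Q}_0$ has constant row sum $\rho_1+\rho_2$; since this row sum is $2k-(N-2)$ for the common degree $k$ of $\Gamma$, I obtain $k=\tfrac{N}{2}-1+\tfrac12(\rho_1+\rho_2)$, which is exactly the claimed value because $\rho_1+\rho_2=\tfrac1\beta(\tfrac{N}{M}-2)$. Together with $v=N-1$ this settles the first two parameters.

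Finally I would identify $\Gamma$ as strongly regular and read off $\lambda$ and $\mu$. Because $\mathbf{Q}_0$ preserves $\mathbf{1}^{\perp}$, on that subspace the top-left block of $\mathbf{Q}^2=(\rho_1+\rho_2)\mathbf{Q}-\rho_1\rho_2\mathbf{I}_N$ collapses (the $\mathbf{J}$-term vanishes and $-\rho_1\rho_2\mathbf{I}=(N-1)\mathbf{I}$) to $(\mathbf{Q}_0-\rho_1\mathbf{I})(\mathbf{Q}_0-\rho_2\mathbf{I})=0$; hence $\mathbf{A}=\tfrac12(\mathbf{Q}_0-\mathbf{I})$ has exactly the two restricted eigenvalues $r=\tfrac{\rho_1-1}{2}$ and $s=\tfrac{\rho_2-1}{2}$, which are distinct since $N>M$ forces $\rho_2<0<\rho_1$. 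A connected regular graph with exactly two restricted eigenvalues is strongly regular, with $\lambda-\mu=r+s$ and $\mu-k=rs$; substituting $\rho_1+\rho_2=2k-N+2$ and $\rho_1\rho_2=-(N-1)$ gives $rs=-\tfrac{k}{2}$, whence $\mu=k+rs=\tfrac{k}{2}$ and $\lambda=\mu+(r+s)=\tfrac{3k-v-1}{2}$, as claimed. I expect the main obstacle to be the rigorous passage to a genuine SRG: one must justify the switch-and-delete (descendant) construction, verify that both $\rho_1$ and $\rho_2$ actually occur in the restricted spectrum and that $\Gamma$ is connected (equivalently $r\neq k$), and invoke the characterization of regular graphs with two restricted eigenvalues; the remaining steps, and in particular the sign convention that makes $k$ carry a plus sign, are routine bookkeeping.
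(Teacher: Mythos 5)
The paper does not prove this proposition at all---it is imported verbatim from \cite{Waldron:09,FickusW:15} with only a citation---so there is no internal proof to compare against; your argument must be judged on its own. It checks out: the identities $\mathbf{G}^2=\tfrac{N}{M}\mathbf{G}$, $\rho_1\rho_2=-(N-1)$, $\rho_1+\rho_2=2k-N+2$, and the resulting values $rs=-k/2$, $\mu=k/2$, $\lambda=\tfrac{3k-v-1}{2}$ are all correct, and the switching step is legitimate since negating vectors preserves the real ETF property. This is essentially the classical regular-two-graph/descendant argument that the cited references formalize. One suggestion: the residual obstacles you list (connectivity of $\Gamma$, both eigenvalues occurring in the restricted spectrum, the two-restricted-eigenvalue characterization) can be bypassed entirely. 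From the top-left block identity $\mathbf{Q}_0^2+\mathbf{J}=(\rho_1+\rho_2)\mathbf{Q}_0-\rho_1\rho_2\mathbf{I}$ together with $\mathbf{Q}_0=2\mathbf{A}-\mathbf{J}+\mathbf{I}$ and the regularity $\mathbf{A}\mathbf{J}=\mathbf{J}\mathbf{A}=k\mathbf{J}$ you already established, a direct expansion yields
\[
\mathbf{A}^2=\tfrac{k}{2}\,\mathbf{I}+\Big(k-\tfrac{N}{2}\Big)\mathbf{A}+\tfrac{k}{2}\,\mathbf{J}
=k\mathbf{I}+\lambda\mathbf{A}+\mu(\mathbf{J}-\mathbf{I}-\mathbf{A}),
\]
and reading off the entries of $\mathbf{A}^2$ as counts of common neighbors gives strong regularity immediately, with no appeal to connectivity or to a spectral characterization. (The only genuine degenerate case is $k=0$, which occurs exactly when the ETF is a simplex, $N=M+1$; the proposition as stated glosses over this, so you may too.)
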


Combined with Corollary~\ref{cor.real tremain}, this then implies the following:

\begin{cor}
\label{cor.real tremain srg}
If there exists an $h\times h$ real Hadamard matrix with $h\equiv 1$ or $2\bmod 3$, then there exists a strongly regular graph with parameters
\[
v=(2h-1)(h+1),
\quad
k = (h+2)(h-1),
\quad
\lambda = \tfrac{1}{2} (h^2+2h-6),
\quad
\mu=\tfrac{1}{2}(h+2)(h-1).
\]
\end{cor}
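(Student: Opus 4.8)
The plan is to apply Proposition~\ref{Waldron} to the real ETF furnished by Corollary~\ref{cor.real tremain}. Under the stated hypothesis on $h$, Corollary~\ref{cor.real tremain} produces a real ETF of $N=h(2h+1)$ vectors in $\mathbb{R}^M$ with $M=\tfrac{1}{3}(h+1)(2h+1)$, and the entire argument amounts to substituting these values into the parameter formulas of Proposition~\ref{Waldron} and simplifying. Before doing so I would verify the hypothesis $N>M$, which is needed for the proposition to apply; this is immediate from $N-M=\tfrac{1}{3}(4h^2-1)=\tfrac{1}{3}(2h-1)(2h+1)>0$.

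The next and most substantive step is to evaluate the Welch bound $\beta=\sqrt{\tfrac{N-M}{M(N-1)}}$. The numerator factors as above, while $N-1=2h^2+h-1=(2h-1)(h+1)$, so that $M(N-1)=\tfrac{1}{3}(2h+1)(2h-1)(h+1)^2$. The factors $(2h\pm1)$ cancel, leaving $\tfrac{N-M}{M(N-1)}=(h+1)^{-2}$ and hence $\beta=\tfrac{1}{h+1}$. This agrees with the Welch bound $\tfrac{1}{R+2}$ found inside the proof of Theorem~\ref{thm.tremain}, under the identification $h=R+1$ coming from the Kronecker construction in Corollary~\ref{cor.real tremain}.

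With $\beta$ in hand the remaining parameters follow by short polynomial manipulations. Using $v=N-1=(2h-1)(h+1)$, $\tfrac{1}{2\beta}=\tfrac{h+1}{2}$, and $\tfrac{N}{M}-2=\tfrac{3h}{h+1}-2=\tfrac{h-2}{h+1}$, the expression for $k$ collapses to $k=\tfrac{1}{2}N-1+\tfrac{1}{2}(h-2)=h^2+h-2=(h+2)(h-1)$; then $\mu=\tfrac{k}{2}=\tfrac{1}{2}(h+2)(h-1)$ and $\lambda=\tfrac{3k-v-1}{2}=\tfrac{1}{2}(h^2+2h-6)$ are read off directly. There is no conceptual obstacle here: the argument is purely computational, and the only step that does real work is the cancellation of the odd factors $(2h-1)(2h+1)$ in the evaluation of $\beta$, which is what forces the clean value $\tfrac{1}{h+1}$ and makes the subsequent simplifications close.
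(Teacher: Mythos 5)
Your proposal is correct and follows exactly the route the paper intends: the paper gives no explicit proof, simply stating that the corollary follows by combining Proposition~\ref{Waldron} with Corollary~\ref{cor.real tremain}, and your substitution of $N=h(2h+1)$, $M=\tfrac{1}{3}(h+1)(2h+1)$ into the parameter formulas (with the key cancellation yielding $\beta=\tfrac{1}{h+1}$) is precisely that omitted computation, carried out correctly.
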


Examining small values of $h\equiv 1$ or $2\bmod 3$ for which there exists a real Hadamard matrix, we list the corresponding real Tremain ETFs as well as the parameters of the associated SRGs given by Corollary~\ref{cor.real tremain srg}:

\begin{center}
\begin{tabular}{c|c|c|c|c|c|c|c}
$h$ & $M$ & $N$ & $v$ & $k$ & $\lambda$ & $\mu$ & New?\\
\hline
$2$ & $5$ & $10$ & $9$ & $4$ & $1$ & $2$ & No\\
$4$ & $15$ & $36$ & $35$ & $18$ & $9$ & $9$ & No\\
$8$ & $51$ & $136$ & $135$ & $70$ & $37$ & $35$ & No\\
$16$ & $187$ & $528$ & $527$ & $270$ & $141$ & $135$ & No\\
$20$ & $287$ & $820$ & $819$ & $418$ & $217$ & $209$ & Yes\\
$28$ & $551$ & $1596$ & $1595$ & $810$ & $417$ & $405$ & ?\\
\end{tabular}
\end{center}

Examining Brouwer's table of strongly regular graphs~\cite{Brouwer:online,Brouwer:96}, we see that the SRG with parameters $v=819$, $k=418$, $\lambda=217$ and $\mu=209$ appears to be new to the literature.
The SRG with parameters $v=1595$, $k=810$, $\lambda = 417$ and $\mu=405$ is too large to be reported on the table.

Of course, the Hadamard conjecture implies that there is an infinite family of real Tremain ETFs, but this is unnecessary.
Indeed, Kronecker powers of the $2\times 2$ Hadamard matrix easily produce an infinite family of admissible Hadamard matrices.
In addition, it is not difficult to verify using Dirichlet's theorem on primes in arithmetic progressions that Paley's construction is admissible infinitely often.
Moreover, admissible Hadamard matrices are closed under Kronecker products, so there is actually a large family of explicit Hadamard matrices available to the real Tremain ETF construction.

While all real ETFs give rise to an SRG by way of Proposition~\ref{Waldron}, there is another method of constructing different SRGs from certain ETFs.
A special case of this construction of SRGs was first observed by Goethals and Seidel~\cite{GoethalsS:70}.

\begin{prop}[see~\cite{FickusJMPW:15}]
\label{GS}
Suppose there exists an equiangular tight frame $\{\varphi_i\}_{i=1}^N$ in $\mathbb{R}^M$ with $N>M$, and set
\[
\alpha = \frac{N}{M},
\qquad 
\beta = \sqrt{\frac{N-M}{M(N-1)}}.
\]
If there exists $x\in\mathbb{R}^M$ such that $\langle x,\varphi_i\rangle=1$ for every $i\in[N]$, then there exists a strongly regular graph with parameters
\[
v=N,
\quad
k=\frac{N-1}{2}+\frac{\alpha-1}{2\beta},
\quad
\lambda = \frac{N}{4} - 1 + \frac{3\alpha-4}{4\beta},
\quad
\mu = \frac{N}{4}+\frac{\alpha}{4\beta}.
\]
\end{prop}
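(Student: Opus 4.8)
The plan is to extract everything from the Gram matrix. Because the parameters of a strongly regular graph are scale invariant, I would first normalize so that $\|\varphi_i\|=1$ for every $i$, rescaling $x$ so that $\langle x,\varphi_i\rangle=1$ still holds. Let $\mathbf{\Phi}$ be the $M\times N$ synthesis operator whose columns are the $\varphi_i$, so that the Gram matrix $\mathbf{G}:=\mathbf{\Phi}^*\mathbf{\Phi}$ has unit diagonal and off-diagonal entries $\pm\beta$. Tightness says the frame operator is $\mathbf{\Phi}\mathbf{\Phi}^*=\alpha\mathbf{I}_M$, so $\mathbf{G}^2=\mathbf{\Phi}^*(\mathbf{\Phi}\mathbf{\Phi}^*)\mathbf{\Phi}=\alpha\mathbf{G}$. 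I would then define a graph on $[N]$ by joining $i\sim j$ exactly when $\langle\varphi_i,\varphi_j\rangle=+\beta$, with adjacency matrix $\mathbf{A}$; separating the positive and negative off-diagonal entries of $\mathbf{G}$ gives the dictionary $\mathbf{G}=(1+\beta)\mathbf{I}_N-\beta\mathbf{J}+2\beta\mathbf{A}$, equivalently $\mathbf{A}=\tfrac{1}{2\beta}\mathbf{G}-\tfrac{1+\beta}{2\beta}\mathbf{I}_N+\tfrac12\mathbf{J}$, where $\mathbf{J}$ is the all-ones matrix. (It is worth noting that this sign convention is what pins down the stated parameters rather than those of the complementary graph.)

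The decisive step is regularity, and this is precisely where the vector $x$ is used. The hypothesis $\langle x,\varphi_i\rangle=1$ for all $i$ is exactly the statement $\mathbf{\Phi}^*x=\1$, so the all-ones vector $\1$ lies in $\ran\mathbf{\Phi}^*$. Invoking tightness once more, $\mathbf{G}\1=\mathbf{\Phi}^*\mathbf{\Phi}\mathbf{\Phi}^*x=\alpha\mathbf{\Phi}^*x=\alpha\1$, so $\1$ is an eigenvector of $\mathbf{G}$ with eigenvalue $\alpha$; since $\mathbf{G}$ is symmetric this also gives $\mathbf{G}\mathbf{J}=\mathbf{J}\mathbf{G}=\alpha\mathbf{J}$. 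Substituting $\mathbf{G}\1=\alpha\1$ into the formula for $\mathbf{A}$ yields $\mathbf{A}\1=k\1$ with $k=\tfrac{N-1}{2}+\tfrac{\alpha-1}{2\beta}$, so the graph is regular of the claimed degree. I expect this to be the main obstacle in the sense that it is the one genuinely non-formal idea: absent such an $x$, a real ETF produces only a regular two-graph, and it is exactly the linear condition $\mathbf{\Phi}^*x=\1$ that upgrades this to an honest regular graph.

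It remains to produce the quadratic that encodes strong regularity. I would square $\mathbf{A}=\tfrac{1}{2\beta}\mathbf{G}-\tfrac{1+\beta}{2\beta}\mathbf{I}_N+\tfrac12\mathbf{J}$ and reduce using $\mathbf{G}^2=\alpha\mathbf{G}$, $\mathbf{G}\mathbf{J}=\alpha\mathbf{J}$, and $\mathbf{J}^2=N\mathbf{J}$, which writes $\mathbf{A}^2$ as a combination of $\mathbf{I}_N$, $\mathbf{A}$, and $\mathbf{J}$. Comparing with the defining relation $\mathbf{A}^2=k\mathbf{I}_N+\lambda\mathbf{A}+\mu(\mathbf{J}-\mathbf{I}_N-\mathbf{A})$ of a strongly regular graph, the coefficient of $\mathbf{J}$ delivers $\mu=\tfrac{N}{4}+\tfrac{\alpha}{4\beta}$ and the coefficient of $\mathbf{A}$ delivers $\lambda=\tfrac{N}{4}-1+\tfrac{3\alpha-4}{4\beta}$. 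Finally the coefficient of $\mathbf{I}_N$ furnishes a second expression for the degree, which must be reconciled with the $k$ found above; this reconciliation is the only place that actually invokes the numerical value of the Welch bound, through $\beta^2=\tfrac{N-M}{M(N-1)}=\tfrac{\alpha-1}{N-1}$, and once it checks out the three parameters are forced, completing the construction.
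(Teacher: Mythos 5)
Your argument is correct: the identification $\mathbf{A}=\tfrac{1}{2\beta}\mathbf{G}-\tfrac{1+\beta}{2\beta}\mathbf{I}_N+\tfrac12\mathbf{J}$, the use of $\mathbf{\Phi}^*x=\1$ to get $\mathbf{G}\1=\alpha\1$ and hence regularity, and the reduction of $\mathbf{A}^2$ via $\mathbf{G}^2=\alpha\mathbf{G}$ all check out, and the final reconciliation of the $\mathbf{I}_N$ coefficient with $k-\mu$ does hold using $\beta^2=\tfrac{\alpha-1}{N-1}$. The paper itself gives no proof of this proposition (it is quoted from \cite{FickusJMPW:15}), and your derivation is the standard Gram-matrix argument used there, so there is nothing further to compare.
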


Note that the additional hypothesis in Proposition~\ref{GS} asks for a vector $x\in\mathbb{R}^M$ such that $\langle x,\varphi_i\rangle=1$ for every $i\in[N]$.
We will show that real Tremain ETFs can be constructed to satisfy this property if the underlying Steiner triple system contains a \textbf{parallel class}, that is, a subset of the blocks that form a partition of the set of points.
A parallel class of a $(2,K,V)$-Steiner system must contain exactly $V/K$ blocks, and so the existence of a parallel class requires $K$ to divide $V$.
Fortunately, it is known that there exists a Steiner triple system with a parallel class whenever $V\equiv3\bmod6$~\cite{ColbournM:07}.
This leads to the following:

\begin{cor}
\label{TGScor}
If there exists an $h\times h$ real Hadamard matrix with $h\equiv 2\bmod 3$, then there exists a strongly regular graph with parameters
\[
v=h(2h+1),
\quad 
k = \frac{(2h-1)(h+2)}{2},
\quad 
\lambda = \frac{(h-1)(h+4)}{2},
\quad 
\mu = \frac{h(h+2)}{2}.
\]
\end{cor}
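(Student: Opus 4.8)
The plan is to apply Proposition~\ref{GS} to a carefully engineered real Tremain ETF. The parameters of Corollary~\ref{cor.real tremain} come from a Steiner triple system with $R=h-1$ and $V=2h-1$, giving $N=h(2h+1)$ and $M=\tfrac13(h+1)(2h+1)$, so in particular $N>M$. Since $V=2h-1$, the congruence $h\equiv 2\bmod 3$ is equivalent to $V\equiv 3\bmod 6$, which by~\cite{ColbournM:07} guarantees a Steiner triple system $(\mathcal V,\mathcal B)$ carrying a \emph{parallel class} $\mathcal P\subseteq\mathcal B$, i.e.\ a set of $V/3$ blocks partitioning $\mathcal V$. A direct computation gives $\alpha=N/M=3h/(h+1)$ and $\beta=1/(R+2)=1/(h+1)$, and substituting these into the formulas of Proposition~\ref{GS} reduces, after routine algebra, to exactly the claimed $v,k,\lambda,\mu$. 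Thus everything hinges on verifying the hypothesis of Proposition~\ref{GS}: the existence of $x\in\mathbb{R}^M$ with $\langle x,\varphi_i\rangle=1$ for every frame vector.

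To produce such an $x$, I would fix the ingredients of the Tremain ETF deliberately. Normalize the given $h\times h$ real Hadamard matrix so that it possesses an all-ones row, and build the simplex $\{\varphi_s\}_{s=1}^{R+1}$ and its Naimark complement $\{a_s\}$ by deleting some \emph{non-constant} row; then $\sum_s a_s=0$, and after reindexing we may assume the all-ones row sits in the first coordinate, so that $\varphi_s(1)=1$ for all $s$. For $\{\psi_t\}_{t=1}^{V+1}$, take the $2h\times 2h$ real Hadamard matrix obtained by the Kronecker construction of Corollary~\ref{cor.real tremain} and delete its all-ones row; this forces $b_t=1$ for every $t$. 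Finally, choose the embedding operators $\mathbf E_v$ so that $\mathbf E_v\delta_1$ is the unique block of $\mathcal P$ containing $v$ (possible since $\mathcal P$ partitions $\mathcal V$), sending $\delta_2,\dots,\delta_R$ to the remaining $R-1$ blocks through $v$.

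With these choices in place, I would propose the candidate
\[
x=\1_{\mathcal P}\oplus 0_{\mathcal V}\oplus\sqrt{\tfrac23}\in\mathbb R^{\mathcal B}\oplus\mathbb R^{\mathcal V}\oplus\mathbb R,
\]
where $\1_{\mathcal P}$ is the indicator of the parallel class. The verification then splits along the two families $\tau_{v,s}$ and $\tau_t$ of~\eqref{eq.names of tremain vectors}. For $\tau_{v,s}$, since the only $\mathcal P$-block through $v$ occupies slot~$1$, we get $\mathbf E_v^*\1_{\mathcal P}=\delta_1$, whence $\langle x,\tau_{v,s}\rangle=\langle\delta_1,\varphi_s\rangle+\sqrt2\,a_s\cdot 0=\varphi_s(1)=1$. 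For $\tau_t$, only the last coordinate survives, giving $\langle x,\tau_t\rangle=\sqrt{\tfrac23}\cdot\sqrt{\tfrac32}\,b_t=b_t=1$. Both families therefore satisfy $\langle x,\cdot\rangle=1$, so Proposition~\ref{GS} applies.

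The main obstacle is precisely this construction of $x$, and the reason a parallel class is the right tool emerges once one asks what $x$ \emph{must} look like. Imposing $\langle z_v,\varphi_s\rangle+\sqrt2\,a_s x_{\mathcal V}(v)=1$ for all $s$, where $z_v:=\mathbf E_v^*x_{\mathcal B}$, is a square nonsingular system whose unique solution is $(z_v;\sqrt2\,x_{\mathcal V}(v))=\tfrac1{R+1}\mathbf H_\varphi\1$, the normalized vector of row sums of the Hadamard matrix; the deliberate deletions above make this equal $(\delta_1;0)$ for \emph{every} $v$. Thus $x_{\mathcal V}$ is forced to vanish and $\mathbf E_v^*x_{\mathcal B}=\delta_1$ must hold simultaneously for all $v$, that is, each point must be able to single out one incident block in a globally consistent way, which is exactly the combinatorial content of a parallel class and hence of the hypothesis $h\equiv 2\bmod 3$. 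Once $x$ is in hand, only the parameter computation of the first paragraph remains.
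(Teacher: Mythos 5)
Your proposal is correct and follows essentially the same route as the paper: the same choice of Hadamard rows (all-ones row retained for $\{\varphi_s\}$, all-ones row deleted for $\{\psi_t\}$ so that $b_t=1$), the same alignment of the embedding operators with a parallel class, and the same vector $x=\1_{\mathcal P}\oplus 0\oplus\sqrt{2/3}$ fed into Proposition~\ref{GS}. The only additions are the explicit parameter algebra and the closing uniqueness discussion, both of which check out.
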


\begin{proof}
Consider real Hadamard matrices $\mathbf{H}_1$ and $\mathbf{H}_2$ of sizes $h\times h$ and $2h\times 2h$, respectively.
Note that we may select $\mathbf{H}_1$ and $\mathbf{H}_2$ so that each has all $1$s in the first row.
Put $R=h-1$ and $V=2h-1$, let $\{\varphi_s\}_{s=1}^{R+1}$ denote the columns of the matrix $\mathbf{\Phi}$ obtained by removing the last row $a$ of $\mathbf{H}_1$, and let $\{\psi_t\}_{t=1}^{V+1}$ denote the columns of the matrix $\mathbf{\Psi}$ obtained by removing the first row $b$ of $\mathbf{H}_2$.
Since $h\equiv2\bmod3$, we have $V\equiv3\bmod6$, and so we may select a Steiner triple system with a parallel class $\mathcal{P}\subseteq\mathcal{B}$.
For each $v\in\mathcal{V}$, pick an embedding operator $\mathbf{E}_v$ such that $\mathbf{E}_v\delta_1=\delta_b$ for the unique $b\in\mathcal{P}$ such that $v\in b$.
Combine these ingredients to produce a real Tremain ETF according to Theorem~\ref{thm.tremain}.

Now pick $x\in\mathbb{C}^\mathcal{B}\oplus\mathbb{C}^\mathcal{V}\oplus\mathbb{C}$ of the form $x=\chi\oplus 0\oplus\sqrt{2/3}$, where $\chi(b)=1$ for every $b\in\mathcal{P}$, and equals zero otherwise.
Then for every $v\in\mathcal{V}$, our choice of $\mathbf{E}_v$ ensures that $\mathbf{E}_v^*\chi=\delta_1$.
Appealing to the notation in \eqref{eq.names of tremain vectors}, we therefore have
\[
\langle x,\tau_{v,s}\rangle
=\langle \chi,\mathbf{E}_v\varphi_s\rangle
=\langle \mathbf{E}_v^*\chi,\varphi_s\rangle
=\langle \delta_1,\varphi_s\rangle
=\overline{\varphi_s(1)}
=1.
\]
Also, $\langle x,\tau_t\rangle=\sqrt{\tfrac{2}{3}}\cdot\sqrt{\tfrac{3}{2}}\overline{b_t}=1$, and so the result follows from Proposition~\ref{GS}.
\end{proof}

For small values of $h$, we have the following table of SRG parameters that arise via Corollary~\ref{TGScor}:

\begin{center}
\begin{tabular}{c|c|c|c|c|c|c|c}
$h$ & $M$ & $N$ & $v$ & $k$ & $\lambda$ & $\mu$ & New?\\
\hline
$2$ & $5$ & $10$ & $10$ & $6$ & $3$ & $4$ & No\\
 $8$ & $   77    $ & $  136    $ & $  136    $ & $   75    $ & $   42    $ & $   40$ & No\\
$20$ & $  431    $ & $  820    $ & $  820    $ & $  429    $ & $  228    $ & $  220$ & Yes\\
$32$ & $ 1073    $ & $ 2080    $ & $ 2080    $ & $ 1071    $ & $  558    $ & $  544$ & ?\\
$44$ & $ 2003    $ & $ 3916    $ & $ 3916    $ & $ 2001    $ & $ 1032    $ & $ 1012$ & ?\\
$56$ & $ 3221    $ & $ 6328    $ & $ 6328    $ & $ 3219    $ & $ 1650    $ & $ 1624$ & ?
\end{tabular}
\end{center}

Again, we consult Brouwer's table of strongly regular graphs~\cite{Brouwer:online,Brouwer:96} to identify which SRGs are known.
The SRG with parameters $v=820$, $k=429$, $\lambda=228$ and $\mu=220$ seems to be new, and the last three graphs from the table above are too large to be reported on the table. 

Having constructed a couple of new families of SRGs, we turn to another interesting class of graphs.
An \textbf{$(n,r,c)$-distance-regular antipodal cover of the complete graph} (DRACKN) is a simple graph $G$ on $rn$ vertices along with a partition of the vertices into $n$ sets (called \textbf{fibers}) of $r$ vertices such that
\begin{itemize}
\item[(i)]
no two vertices in the same fiber are adjacent,
\item[(ii)]
the edge set contains a perfect matching between any two fibers, and
\item[(iii)]
non-adjacent vertices have $c$ common neighbors.
\end{itemize}
DRACKNs have been investigated in~\cite{GodsilH:92,KlinP:11,CountinhoGSZ:15}.
Recently, Coutinho, Godsil, Shirazi and Zhan~\cite{CountinhoGSZ:15} proved the following result that produces DRACKNs from certain ETFs:

\begin{prop}[Theorem~5.1 in~\cite{CountinhoGSZ:15}]
\label{DRACKN}
Let $p$ be prime.
If there exists an equiangular tight frame $\{\varphi_i\}_{i=1}^N$ in $\mathbb{C}^M$ such that $\langle \varphi_i,\varphi_j\rangle$ is a $p$th root of unity whenever $i\neq j$, then there exists an $(N,p,c)$-distance-regular antipodal cover of the complete graph, where
\[
c = \frac{1}{p}\left(N-2+\frac{2M-N}{\beta M}\right),
\quad
\quad \beta = \sqrt{\frac{N-M}{M(N-1)}}.\]
\end{prop}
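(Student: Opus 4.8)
The plan is to realize the desired cover as an abelian $\Z_p$-cover of the complete graph $K_N$, with the connection prescribed directly by the root-of-unity entries of the Gram matrix. Rescale the frame so that $\langle\varphi_i,\varphi_j\rangle=\zeta^{c_{ij}}$ for $i\neq j$, where $\zeta:=e^{2\pi\mathrm{i}/p}$ and $c_{ij}\in\Z_p$ satisfies $c_{ji}=-c_{ij}$ by Hermitian symmetry; after this scaling each vector has squared norm $1/\beta$, so the Gram matrix is $\mathbf{G}=\frac1\beta\mathbf{I}+\mathbf{W}$, where $\mathbf{W}$ has zero diagonal and off-diagonal entries $\zeta^{c_{ij}}$. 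Define a graph on the vertex set $[N]\times\Z_p$ by declaring $(i,a)\sim(j,b)$ precisely when $i\neq j$ and $b-a\equiv c_{ij}$, and take the fibers to be the sets $\{i\}\times\Z_p$. With this definition conditions (i) and (ii) are immediate: the adjacency matrix $\mathbf{A}$, viewed as an $N\times N$ array of $p\times p$ blocks, has zero diagonal blocks, and its off-diagonal block in position $(i,j)$ is the permutation matrix $\mathbf{C}^{c_{ij}}$, where $\mathbf{C}$ is the cyclic shift on $\Z_p$; the relation $c_{ji}=-c_{ij}$ makes $\mathbf{A}$ symmetric.

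The content is condition (iii), which I would verify by Fourier analysis over $\Z_p$. Since every block of $\mathbf{A}$ is a power of the single commuting matrix $\mathbf{C}$, the characters of $\Z_p$ simultaneously block-diagonalize $\mathbf{A}$: for each $m\in\Z_p$ one obtains the $N\times N$ matrix $\mathbf{A}^{(m)}$ with $(\mathbf{A}^{(m)})_{ij}=\zeta^{mc_{ij}}$ for $i\neq j$ and zero diagonal. Two identifications are key: $\mathbf{A}^{(0)}=\mathbf{J}-\mathbf{I}$ (the adjacency matrix of $K_N$), and $\mathbf{A}^{(1)}=\mathbf{G}-\frac1\beta\mathbf{I}$. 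A short computation via character orthogonality shows that the number of common neighbors of $(i,a)$ and $(j,b)$ equals
\[
\frac1p\sum_{m\in\Z_p}\zeta^{-m(b-a)}\big((\mathbf{A}^{(m)})^2\big)_{ij},
\]
so it remains to evaluate the diagonal-free squares $(\mathbf{A}^{(m)})^2$.

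For $m=0$ one has $(\mathbf{J}-\mathbf{I})^2=(N-2)\mathbf{J}+\mathbf{I}$, contributing $N-2$ off the diagonal. For the remaining $m$ I would prove that each $\mathbf{A}^{(m)}+\frac1\beta\mathbf{I}$ is again the Gram matrix of an ETF with the same spectrum, namely eigenvalue $\frac{N}{M\beta}$ with multiplicity $M$ and eigenvalue $0$ with multiplicity $N-M$. Granting this, tightness gives $(\mathbf{A}^{(m)}+\frac1\beta\mathbf{I})^2=\frac{N}{M\beta}(\mathbf{A}^{(m)}+\frac1\beta\mathbf{I})$, whence $\big((\mathbf{A}^{(m)})^2\big)_{ij}=\frac{N-2M}{M\beta}\,\zeta^{mc_{ij}}$ for $i\neq j$. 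Substituting into the sum and using $\sum_{m=1}^{p-1}\zeta^{ms}=-1$ for $s\not\equiv0$, the root-of-unity terms collapse and every pair of non-adjacent vertices lying in distinct fibers (i.e.\ $i\neq j$, $b-a\neq c_{ij}$) is found to have exactly
\[
c=\frac1p\Big(N-2+\frac{2M-N}{\beta M}\Big)
\]
common neighbors, as claimed; the same-fiber pairs have no common neighbors and constitute the antipodal classes at maximal distance.

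The main obstacle is the spectral claim for $m\neq0$, and this is exactly where primality of $p$ is used. The maps $\zeta\mapsto\zeta^m$ for $m\in\{1,\ldots,p-1\}$ are precisely the Galois automorphisms of the cyclotomic field $\Q(\zeta)$; applying such an automorphism entrywise to the identity $\mathbf{G}^2=\frac{N}{M\beta}\mathbf{G}$ transports it to the corresponding identity for $\mathbf{A}^{(m)}+\frac1\beta\mathbf{I}$, provided the automorphism can be chosen to fix the real scalar $\beta$ (equivalently $\frac1\beta$, the common diagonal entry). This is automatic when $\beta\notin\Q(\zeta)$, since then $\sigma\colon\zeta\mapsto\zeta^m$ extends to $\Q(\zeta,\beta)$ fixing $\beta$; the delicate case is $\beta\in\Q(\zeta)$, i.e.\ when $\Q(\beta)$ is the unique quadratic subfield of $\Q(\zeta)$, where one must separately confirm that $\beta$ is fixed by each relevant automorphism—equivalently, that all the sheets $\mathbf{A}^{(m)}$ are cospectral, so that the integer matrix $\mathbf{A}$ has a Galois-stable spectrum with only the four distinct eigenvalues that are the hallmark of a distance-regular antipodal cover of diameter three.
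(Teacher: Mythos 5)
The paper does not actually prove this proposition: it is imported verbatim as Theorem~5.1 of \cite{CountinhoGSZ:15}, so there is no in-paper argument to compare against. Judged on its own, your strategy is the natural one (and, as far as I can tell, the one the cited source follows): build the $\mathbb{Z}_p$-cover from the exponents $c_{ij}$, block-diagonalize the adjacency matrix by the characters of $\mathbb{Z}_p$, and transport the tight-frame identity $\mathbf{G}^2=\tfrac{N}{M\beta}\mathbf{G}$ to each sheet $\mathbf{A}^{(m)}$ by the Galois automorphism $\zeta\mapsto\zeta^m$. Your bookkeeping is correct: the fiber/matching conditions, the formula $\tfrac1p\sum_m\zeta^{-m(b-a)}\bigl((\mathbf{A}^{(m)})^2\bigr)_{ij}$ for common neighbours, the $m=0$ contribution $N-2$, the off-diagonal value $\tfrac{N-2M}{M\beta}\zeta^{mc_{ij}}$ granted the spectral claim, and the final collapse to the stated $c$ all check out, as does the observation that same-fiber pairs have no common neighbours.

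However, the step you flag as ``delicate'' is a genuine gap, not a formality, and you leave it open. If $\beta\in\mathbb{Q}(\zeta)\setminus\mathbb{Q}$ (possible a priori only when $p\equiv1\bmod4$, since $\mathbb{Q}(\beta)$ would have to be the real quadratic subfield $\mathbb{Q}(\sqrt p)$), then for a quadratic non-residue $m$ one is forced to have $\sigma_m(\beta)=-\beta$, hence $\bigl((\mathbf{A}^{(m)})^2\bigr)_{ij}=-\tfrac{N-2M}{M\beta}\zeta^{mc_{ij}}$, and the character sum becomes a Gauss sum: the common-neighbour count of a non-adjacent cross-fiber pair works out to $\tfrac1p\bigl(N-2+\tfrac{N-2M}{M\beta}\chi(s)\sqrt p\bigr)$ with $s=c_{ij}-(b-a)$ and $\chi$ the Legendre symbol. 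Unless $N=2M$, this takes two distinct values, so the graph you built would \emph{not} be a DRACKN. The proposition is therefore true only because this case cannot occur (equivalently, because $\tfrac{N-2M}{M\beta}$ is rational for every ETF of this kind), and that is precisely the fact your argument must supply and does not. Your closing remark that one ``must separately confirm that $\beta$ is fixed by each relevant automorphism'' restates the obligation rather than discharging it; note also that soft consistency checks (Galois-stability of the integer spectrum of $\mathbf{A}$, integrality of traces) do not rule the bad case out, so a genuine additional argument is required to close the proof.
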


In what follows, we show how Tremain ETFs can be constructed to satisfy the hypothesis of Proposition~\ref{DRACKN} and hence produce DRACKNs.
The crucial ingredient here is the existence of so-called \textbf{Butson-type Hadamard matrices}, that is, complex Hadamard matrices whose entries are roots of unity.
Notationally, an $H(p,h)$ matrix is any $h\times h$ Hadamard matrix whose entries are all $p$th roots of unity.

\begin{cor}
\label{cor.tremain drackn}
Let $p$ be prime.
If there exist $H(p,h)$ and $H(p,2h)$ matrices for some $h\equiv 1$ or $2\bmod 3$, then there exists a $(h(2h+1),p,2h^{2}/p)$-distance-regular antipodal cover of the complete graph.
\end{cor}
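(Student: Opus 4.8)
The plan is to specialize the Tremain ETF construction of Theorem~\ref{thm.tremain} so that every off-diagonal Gram entry is a $p$th root of unity, and then to invoke Proposition~\ref{DRACKN}. First I would assemble the ingredients with $R=h-1$ and $V=2h-1$: extract the unimodular simplex $\{\varphi_s\}_{s=1}^{R+1}$ together with its Naimark complement $\{a_s\}_{s=1}^{R+1}$ by deleting a row of the given $H(p,h)$ matrix, and likewise extract $\{\psi_t\}_{t=1}^{V+1}$ and $\{b_t\}_{t=1}^{V+1}$ by deleting a row of the $H(p,2h)$ matrix. The congruence $h\equiv 1$ or $2\bmod 3$ is precisely what forces $V=2h-1\equiv 1$ or $3\bmod 6$, so a Steiner triple system $(\mathcal{V},\mathcal{B})$ on $V$ points exists; feeding all of this into Theorem~\ref{thm.tremain} produces a Tremain ETF of $N=V(R+1)+(V+1)=h(2h+1)$ vectors in $\mathbb{C}^\mathcal{B}\oplus\mathbb{C}^\mathcal{V}\oplus\mathbb{C}$, a space of dimension $M=\tfrac13(h+1)(2h+1)$.

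Next I would check the hypothesis of Proposition~\ref{DRACKN}, namely that each inner product between distinct ETF vectors is a $p$th root of unity. This falls straight out of the four-case analysis in the proof of Theorem~\ref{thm.tremain}, where every such inner product is shown to equal one of $a_s\overline{a_{s'}}$, $\varphi_s(j)\overline{\varphi_{s'}(i)}$, $b_t\overline{b_{t'}}$, or $a_s\psi_t(v)$ (see \eqref{eq.ip1}--\eqref{eq.ip4}). Since each of $a_s$, $b_t$, $\varphi_s(j)$ and $\psi_t(v)$ is an entry of a Butson matrix $H(p,\cdot)$, it is a $p$th root of unity, and because the $p$th roots of unity form a group under multiplication that is closed under complex conjugation, each of the displayed products is again a $p$th root of unity.

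It then remains to apply Proposition~\ref{DRACKN} and reconcile the parameters. The construction has fiber count $N=h(2h+1)$ and cover index $p$, matching the first two coordinates of the claimed $(h(2h+1),p,2h^2/p)$-DRACKN. For the third coordinate I would substitute $\beta=\tfrac{1}{R+2}=\tfrac{1}{h+1}$ (the value of the Welch bound computed in the proof of Theorem~\ref{thm.tremain}) into $c=\tfrac1p\bigl(N-2+\tfrac{2M-N}{\beta M}\bigr)$; a short computation using $2M-N=\tfrac13(2h+1)(2-h)$ and $\beta M=\tfrac13(2h+1)$ gives $\tfrac{2M-N}{\beta M}=2-h$, whence $c=\tfrac1p(N-h)=2h^2/p$. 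Since the construction itself is mechanical once Theorem~\ref{thm.tremain} and Proposition~\ref{DRACKN} are in hand, the only point requiring any real care is this arithmetic collapse, together with the implicit fact that $2h^2/p$ is a nonnegative integer; the latter is automatic, since the existence of an $H(p,h)$ matrix with $p$ prime forces $p\mid h$ (a vanishing sum of $p$th roots of unity must use each root equally often), so no separate verification is needed.
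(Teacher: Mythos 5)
Your proposal is correct and follows essentially the same route as the paper: specialize Theorem~\ref{thm.tremain} with $R=h-1$, $V=2h-1$ and simplices drawn from the Butson matrices, then observe from \eqref{eq.ip1}--\eqref{eq.ip4} that every off-diagonal inner product is a product of $p$th roots of unity, so Proposition~\ref{DRACKN} applies. You additionally carry out the arithmetic verifying $c=2h^2/p$ and its integrality, which the paper leaves implicit; both computations check out.
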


\begin{proof}
Consider $H(p,h)$ and $H(p,2h)$ matrices $\mathbf{H}_1$ and $\mathbf{H}_2$, respectively.
Put $R=h-1$ and $V=2h-1$, let $\{\varphi_s\}_{s=1}^{R+1}$ denote the columns of the matrix $\mathbf{\Phi}$ obtained by removing the first row $a$ of $\mathbf{H}_1$, and let $\{\psi_t\}_{t=1}^{V+1}$ denote the columns of the matrix $\mathbf{\Psi}$ obtained by removing the first row $b$ of $\mathbf{H}_2$.
Since $h\equiv 1$ or $2\bmod 3$, we have $V\equiv 1$ or $3 \bmod6$, and so we may select a Steiner triple system with appropriate embedding operators, and combine these ingredients to produce a Tremain ETF according to Theorem~\ref{thm.tremain}.

By Proposition~\ref{DRACKN}, it suffices to show that the inner products between all pairs of vectors in this ETF are $p$th roots of unity.
These inner products are expressed in equations \eqref{eq.ip1}--\eqref{eq.ip4}.
Since $a_s$, $b_t$, $\varphi_s(i)$ and $\psi_t(v)$ are $p$th roots of unity for every $s\in[R+1]$, $t\in[V+1]$, $i\in[R]$ and $v\in\mathcal{V}$, we are done.
\end{proof}

It is conjectured that there exists an $H(p,\lambda p)$ matrix for every prime $p$ and every positive integer $\lambda$.
One result in this direction is the following, due to Butson himself:

\begin{prop}[see~\cite{Butson:62}]
If $p$ is prime and $k\geq m\geq0$ are integers, then there exists an $H(p,2^{m}p^{k})$ matrix.
\end{prop}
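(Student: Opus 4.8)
The plan is to reduce the statement to two base constructions---the Fourier matrix and a single $H(p,2p)$ matrix---and then glue copies of them together with Kronecker products. The key algebraic observation is the factorization
\[
2^m p^k=(2p)^m\,p^{\,k-m},
\]
which is available precisely because the hypothesis $k\geq m$ forces the exponent $k-m$ to be a nonnegative integer. Thus it suffices to produce an $H(p,2p)$ matrix and an $H(p,p)$ matrix, and to check that the $H(p,\cdot)$ property is preserved under Kronecker products: taking $m$ Kronecker factors equal to $H(p,2p)$ and $k-m$ factors equal to $H(p,p)$ then yields an $H(p,(2p)^m p^{\,k-m})=H(p,2^m p^k)$ matrix. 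The degenerate cases $m=0$ (a Kronecker power of the Fourier matrix) and $k=m$ (a Kronecker power of $H(p,2p)$) are handled by the same recipe, with $H(p,1)=[1]$ covering $k=m=0$.

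I would dispatch the two easy ingredients first. For the Kronecker step, if $\mathbf{A}$ is $H(p,a)$ and $\mathbf{B}$ is $H(p,b)$, then the entries of $\mathbf{A}\otimes\mathbf{B}$ are the products $\mathbf{A}_{ij}\mathbf{B}_{kl}$, each a product of two $p$th roots of unity and hence again a $p$th root of unity, while the mixed-product rule gives $(\mathbf{A}\otimes\mathbf{B})(\mathbf{A}\otimes\mathbf{B})^*=(\mathbf{A}\mathbf{A}^*)\otimes(\mathbf{B}\mathbf{B}^*)=(a\mathbf{I})\otimes(b\mathbf{I})=ab\,\mathbf{I}$, so $\mathbf{A}\otimes\mathbf{B}$ is $H(p,ab)$. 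For the Fourier base case, the $p\times p$ matrix $\mathbf{F}=[\omega^{jl}]_{j,l=0}^{p-1}$ with $\omega=e^{2\pi i/p}$ has $p$th-root entries, and the geometric sum $\sum_{l=0}^{p-1}\omega^{(j-k)l}=p\,\delta_{jk}$ gives $\mathbf{F}\mathbf{F}^*=p\mathbf{I}$, so $\mathbf{F}$ is an $H(p,p)$ matrix.

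The real work---and the step I expect to be the main obstacle---is the construction of a single $H(p,2p)$ matrix when $p$ is odd. The naive idea of imitating Sylvester's doubling, that is, forming $\big(\begin{smallmatrix}\mathbf{F}&\mathbf{F}\\ \mathbf{F}&-\mathbf{F}\end{smallmatrix}\big)$ or any twisted analogue, fails at once because the sign $-1$ is not a $p$th root of unity for odd $p$. Moreover this obstruction is structural rather than incidental: writing out the orthogonality of two rows shows that each off-diagonal inner product is a sum of $2p$ $p$th roots of unity, and for a prime $p$ such a sum vanishes only when each $p$th root occurs exactly twice. Consequently an $H(p,2p)$ matrix is equivalent to a generalized Hadamard (difference) matrix of index $2$ over the cyclic group $\Z_p$, and the vanishing-sum structure shows that the ``$2$'' cannot be split off by any Fourier-type factoring---that is exactly what the sign obstruction forbids.

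I would therefore construct $H(p,2p)$ directly, exhibiting an explicit $2p\times 2p$ array of $p$th roots of unity and verifying $\mathbf{H}\mathbf{H}^*=2p\,\mathbf{I}$ by the same sort of character-sum computation used throughout Section~2. This is the content of Butson's construction, and it is where all the genuine difficulty resides; once it is in hand, the Kronecker assembly of the first two paragraphs finishes the proof, with the constraint $k\geq m$ entering only through the factorization $2^m p^k=(2p)^m p^{\,k-m}$.
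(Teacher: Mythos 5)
Your reduction is sound as far as it goes: the identity $2^m p^k=(2p)^m p^{\,k-m}$ (available precisely because $k\ge m$), the closure of the class of $H(p,\cdot)$ matrices under Kronecker products via the mixed-product rule, and the Fourier matrix as an $H(p,p)$ are all correct and completely verified. The problem is that the one ingredient carrying essentially all of the content of the proposition --- an $H(p,2p)$ matrix for odd primes $p$ --- is announced but never produced. You correctly identify this as the main obstacle, correctly observe that Sylvester doubling fails because $-1$ is not a $p$th root of unity for odd $p$, and correctly reformulate the target as a generalized Hadamard (difference) matrix of index $2$ over $\Z_p$ (your vanishing-sum remark is fine: for prime $p$ the only vanishing sums of $p$th roots of unity are unions of full orbits, so each off-diagonal row inner product must hit every $p$th root exactly twice). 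But the argument then stops at ``I would therefore construct $H(p,2p)$ directly, exhibiting an explicit array'' --- no array is exhibited and no orthogonality is checked. As written, the proof establishes only the case $m=0$ (Kronecker powers of the Fourier matrix) together with $p=2$ (where real Hadamard matrices of order $2^m\cdot 2^k$ suffice).

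This is a genuine gap rather than a stylistic omission, because the existence of an $H(p,2p)$ for odd $p$ is exactly the nontrivial assertion being attributed to Butson; the paper itself offers no proof beyond the citation, so a self-contained argument must supply that base construction (equivalently, a $GH(p,2)$ difference matrix over $\Z_p$, which does exist for every prime $p$ and admits an explicit description) or else follow Butson's original, more direct construction of the full $H(p,2^m p^k)$. The Kronecker scaffolding you built is a perfectly reasonable way to organize the theorem once that single $2p\times 2p$ matrix is in hand, but without it the proof is incomplete at precisely the point you flagged as containing all the difficulty.
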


As such, the construction in Corollary~\ref{cor.tremain drackn} gives an infinite family of DRACKNs.
To demonstrate the novelty of this construction, we first note that by Theorem~5.1 in~\cite{CountinhoGSZ:15}, our DRACKNs are abelian in a certain (technical) sense.
Existing abelian DRACKNs are enumerated in Table~1 in~\cite{CountinhoGSZ:15}, and all of these either have $n=rc$ or $(n,r,c)=(45,3,12)$.
By contrast, the DRACKNs from Corollary~\ref{cor.tremain drackn} satisfy $n-rc=h\neq0$.

\end{document}